\documentclass[12pt,final]{amsart}

\usepackage{xspace,amssymb,epsfig,euscript,eufrak,enumerate,amsmath,nicefrac}
\DeclareMathAlphabet{\mathbbm}{U}{bbm}{m}{n}
\usepackage{tikz,etex,xspace,afterpage}
 \usepackage{epstopdf,ifpdf,todonotes}
\usepackage[notref,notcite]{showkeys}
\usepackage[margin=3.2cm,footskip=25pt,headheight=20pt]{geometry}

\usepackage{fancyhdr,mathrsfs}
\usepackage{hyperref}

\pagestyle{fancy}
\chead[\theshorttitle]{Ben Webster}
\cfoot[\thepage]{\thepage}

  \usepackage{pdfsync}

\begin{document}
\newtheoremstyle{all}
  {11pt}
  {11pt}
  {\slshape}
  {}
  {\bfseries}
  {}
  {.5em}
  {}

\theoremstyle{all}

\newtheorem{itheorem}{Theorem}
\newtheorem{theorem}{Theorem}[section]
\newtheorem*{theoremfourfive}{Theorem 4.5}
\newtheorem*{proposition*}{Proposition}
\newtheorem{proposition}[theorem]{Proposition}
\newtheorem{corollary}[theorem]{Corollary}
\newtheorem{lemma}[theorem]{Lemma}
\newtheorem{assumption}[theorem]{Assumption}
\newtheorem{definition}[theorem]{Definition}
\newtheorem{ques}[theorem]{Question}
\newtheorem{conj}[theorem]{Conjecture}

\theoremstyle{remark}
\newtheorem{remark}[theorem]{Remark}
\newtheorem{example}[theorem]{Example}

\newcommand{\nc}{\newcommand}
\newcommand{\renc}{\renewcommand}
\newcounter{subeqn}
\renewcommand{\thesubeqn}{\theequation\alph{subeqn}}
\newcommand{\subeqn}{%
  \refstepcounter{subeqn}
  \tag{\thesubeqn}
}\makeatletter
\@addtoreset{subeqn}{equation}
\newcommand{\newseq}{%
  \refstepcounter{equation}
}
  \nc{\kac}{\kappa^C}
\nc{\alg}{A}
\nc{\Lco}{L_{\la}}
\nc{\qD}{q^{\nicefrac 1D}}
\nc{\ocL}{M_{\la}}
\nc{\excise}[1]{}
\nc{\Dbe}{D^{\uparrow}}
\nc{\Dfg}{D^{\mathsf{fg}}}

\nc{\op}{\operatorname{op}}
\nc{\Sym}{\operatorname{Sym}}
\nc{\tr}{\operatorname{tr}}
\newcommand{\Mirkovic}{Mirkovi\'c\xspace}
\nc{\tla}{\mathsf{t}_\la}
\nc{\llrr}{\langle\la,\rho\rangle}
\nc{\lllr}{\langle\la,\la\rangle}
\nc{\K}{\mathbbm{k}}
\nc{\Stosic}{Sto{\v{s}}i{\'c}\xspace}
\nc{\cd}{\mathcal{D}}
\nc{\cT}{\mathcal{T}}
\nc{\vd}{\mathbb{D}}
\nc{\lift}{\gamma}
\nc{\cox}{h}
\nc{\Aut}{\operatorname{Aut}}
\nc{\R}{\mathbb{R}}
\renc{\wr}{\operatorname{wr}}
  \nc{\Lam}[3]{\La^{#1}_{#2,#3}}
  \nc{\Lab}[2]{\La^{#1}_{#2}}
  \nc{\Lamvwy}{\Lam\Bv\Bw\By}
  \nc{\Labwv}{\Lab\Bw\Bv}
  \nc{\nak}[3]{\mathcal{N}(#1,#2,#3)}
  \nc{\hw}{highest weight\xspace}
  \nc{\al}{\alpha}
\numberwithin{equation}{section}
\renc{\theequation}{\arabic{section}.\arabic{equation}}
  \nc{\be}{\beta}
  \nc{\bM}{\mathbf{m}}
  \nc{\Bu}{\mathbf{u}}
\nc{\Ba}{\mathbf{a}}

  \nc{\bkh}{\backslash}
  \nc{\Bi}{\mathbf{i}}
  \nc{\Bm}{\mathbf{m}}
  \nc{\Bj}{\mathbf{j}}
 \nc{\Bk}{\mathbf{k}}
\newcommand{\bS}{\mathbb{S}}
\newcommand{\bT}{\mathbb{T}}
\newcommand{\bt}{\mathbbm{t}}

\nc{\bd}{\mathbf{d}}
\nc{\D}{\mathcal{D}}
\nc{\mmod}{\operatorname{-mod}}  
\newcommand{\red}{\mathfrak{r}}

\nc{\RAA}{R^\A_A}
  \nc{\Bv}{\mathbf{v}}
  \nc{\Bw}{\mathbf{w}}
\nc{\Id}{\operatorname{Id}}
  \nc{\By}{\mathbf{y}}
\nc{\eE}{\EuScript{E}}
  \nc{\Bz}{\mathbf{z}}
  \nc{\coker}{\mathrm{coker}\,}
  \nc{\C}{\mathbb{C}}
\nc{\ab}{{\operatorname{ab}}}
\nc{\wall}{\mathbbm{w}}
  \nc{\ch}{\mathrm{ch}}
  \nc{\de}{\delta}
  \nc{\ep}{\epsilon}
  \nc{\Rep}[2]{\mathsf{Rep}_{#1}^{#2}}
  \nc{\Ev}[2]{E_{#1}^{#2}}
  \nc{\fr}[1]{\mathfrak{#1}}
  \nc{\fp}{\fr p}
  \nc{\fq}{\fr q}
  \nc{\fl}{\fr l}
  \nc{\fgl}{\fr{gl}}
\nc{\rad}{\operatorname{rad}}
\nc{\ind}{\operatorname{ind}}
  \nc{\GL}{\mathrm{GL}}
\newcommand{\arxiv}[1]{\href{http://arxiv.org/abs/#1}{\tt arXiv:\nolinkurl{#1}}}
  \nc{\Hom}{\mathrm{Hom}}
  \nc{\im}{\mathrm{im}\,}
  \nc{\La}{\Lambda}
  \nc{\la}{\lambda}
  \nc{\mult}{b^{\mu}_{\la_0}\!}
  \nc{\mc}[1]{\mathcal{#1}}
  \nc{\om}{\omega}
\nc{\gl}{\mathfrak{gl}}
  \nc{\cF}{\mathcal{F}}
 \nc{\cC}{\mathcal{C}}
  \nc{\Mor}{\mathsf{Mor}}
  \nc{\HOM}{\operatorname{HOM}}
  \nc{\Ob}{\mathsf{Ob}}
  \nc{\Vect}{\mathsf{Vect}}
\nc{\gVect}{\mathsf{gVect}}
  \nc{\modu}{\mathsf{-mod}}
\nc{\pmodu}{\mathsf{-pmod}}
  \nc{\qvw}[1]{\La(#1 \Bv,\Bw)}
  \nc{\van}[1]{\nu_{#1}}
  \nc{\Rperp}{R^\vee(X_0)^{\perp}}
  \nc{\si}{\sigma}
\nc{\sgns}{{\boldsymbol{\sigma}}}
  \nc{\croot}[1]{\al^\vee_{#1}}
\nc{\di}{\mathbf{d}}
  \nc{\SL}[1]{\mathrm{SL}_{#1}}
  \nc{\Th}{\theta}
  \nc{\vp}{\varphi}
  \nc{\wt}{\mathrm{wt}}
\nc{\te}{\tilde{e}}
\nc{\tf}{\tilde{f}}
\nc{\hwo}{\mathbb{V}}
\nc{\soc}{\operatorname{soc}}
\nc{\cosoc}{\operatorname{cosoc}}
 \nc{\Q}{\mathbb{Q}}

  \nc{\Z}{\mathbb{Z}}
  \nc{\Znn}{\Z_{\geq 0}}
  \nc{\ver}{\EuScript{V}}
  \nc{\Res}[2]{\operatorname{Res}^{#1}_{#2}}
  \nc{\edge}{\EuScript{E}}
  \nc{\Spec}{\mathrm{Spec}}
  \nc{\tie}{\EuScript{T}}
  \nc{\ml}[1]{\mathbb{D}^{#1}}
  \nc{\fQ}{\mathfrak{Q}}
        \nc{\fg}{\mathfrak{g}}
        \nc{\ft}{\mathfrak{t}}
  \nc{\Uq}{U_q(\fg)}
        \nc{\bom}{\boldsymbol{\omega}}
\nc{\bla}{{\underline{\boldsymbol{\la}}}}
\nc{\bmu}{{\underline{\boldsymbol{\mu}}}}
\nc{\bal}{{\boldsymbol{\al}}}
\nc{\bet}{{\boldsymbol{\eta}}}
\nc{\rola}{X}
\nc{\wela}{Y}
\nc{\fM}{\mathfrak{M}}
\nc{\fX}{\mathfrak{X}}
\nc{\fH}{\mathfrak{H}}
\nc{\fE}{\mathfrak{E}}
\nc{\fF}{\mathfrak{F}}
\nc{\fI}{\mathfrak{I}}
\nc{\qui}[2]{\fM_{#1}^{#2}}
\nc{\cL}{\mathcal{L}}
\nc{\ca}[2]{\fQ_{#1}^{#2}}
\nc{\cat}{\mathcal{V}}
\nc{\cata}{\mathfrak{V}}
\nc{\catf}{\mathscr{V}}
\nc{\hl}{\mathcal{X}}
\nc{\hld}{\EuScript{X}}
\nc{\hldbK}{\EuScript{X}^{\bla}_{\bar{\mathbb{K}}}}

\nc{\pil}{{\boldsymbol{\pi}}^L}
\nc{\pir}{{\boldsymbol{\pi}}^R}
\nc{\cO}{\mathcal{O}}
\nc{\param}{\Pi}
\nc{\Ko}{\text{\Denarius}}
\nc{\Ei}{\fE_i}
\nc{\Fi}{\fF_i}
\nc{\fil}{\mathcal{H}}
\nc{\brr}[2]{\beta^R_{#1,#2}}
\nc{\brl}[2]{\beta^L_{#1,#2}}
\nc{\so}[2]{\EuScript{Q}^{#1}_{#2}}
\nc{\EW}{\mathbf{W}}
\nc{\rma}[2]{\mathbf{R}_{#1,#2}}
\nc{\Dif}{\EuScript{D}}
\nc{\MDif}{\EuScript{E}}
\renc{\mod}{\mathsf{mod}}
\nc{\modg}{\mathsf{mod}^g}
\nc{\fmod}{\mathsf{mod}^{fd}}
\nc{\id}{\operatorname{id}}
\nc{\DR}{\mathbf{DR}}
\nc{\End}{\operatorname{End}}
\nc{\Fun}{\operatorname{Fun}}
\nc{\Ext}{\operatorname{Ext}}
\nc{\tw}{\tau}
\nc{\bbK}{\mathbb{K}}
\nc{\A}{\EuScript{A}}
\nc{\Loc}{\mathsf{Loc}}
\nc{\eF}{\EuScript{F}}
\nc{\LAA}{\Loc^{\A}_{A}}
\nc{\perv}{\mathsf{Perv}}
\nc{\gfq}[2]{B_{#1}^{#2}}
\nc{\qgf}[1]{A_{#1}}
\nc{\poly}{\mathscr{U}}
\nc{\qgr}{\qgf\rho}
\nc{\tqgf}{\tilde A}
\nc{\Tr}{\operatorname{Tr}}
\nc{\Tor}{\operatorname{Tor}}
\nc{\cQ}{\mathcal{Q}}
\nc{\st}[1]{\Delta(#1)}
\nc{\cst}[1]{\nabla(#1)}
\nc{\ei}{\mathbf{e}_i}
\nc{\Be}{\mathbf{e}}
\nc{\Hck}{\mathfrak{H}}
\renc{\P}{\mathbb{P}}
\nc{\bbB}{\mathbb{B}}
\nc{\ssy}{\mathsf{y}}
\nc{\cI}{\mathcal{I}}
\nc{\cG}{\mathcal{G}}
\nc{\cH}{\mathcal{H}}
\nc{\coe}{\mathfrak{K}}
\nc{\pr}{\operatorname{pr}}
\nc{\bra}{\mathfrak{B}}
\nc{\ps}{s}
\nc{\cs}{\mathsf{s}}

\nc{\bps}{\mathbf{s}}
\nc{\rcl}{\rho^\vee(\la)}
\nc{\tU}{\mathcal{U}}
\nc{\dU}{{\stackon[8pt]{\tU}{\cdot}}}
\nc{\dT}{{\stackon[8pt]{\cT}{\cdot}}}
\nc{\Coulomb}{\mathfrak{A}}
\nc{\fCoulomb}{\mathfrak{A}'}
\nc{\Cspace}{\mathbb{X}}
\nc{\RHom}{\mathrm{RHom}}
\nc{\tcO}{\tilde{\cO}}
\nc{\Yon}{\mathscr{Y}}
\nc{\sI}{{\mathsf{I}}}
\nc{\sptc}{X_*(T)_1}
\nc{\DO}{u}
\nc{\spt}{\ft_1}
\nc{\Bpsi}{u}
\nc{\acham}{\eta}
\nc{\hyper}{\mathsf{H}}
\nc{\AF}{\EuScript{Fl}}
\nc{\VB}{\EuScript{X}}
\newcommand{\cOg}{\mathcal{O}_{\!\operatorname{g}}}
\newcommand{\dOg}{D_{\cOg}}
\newcommand{\preO}{p\cOg}
\newcommand{\dpreO}{D_{p\cOg}}
\setcounter{tocdepth}{1}
\newcommand{\thetitle}{Representation theory of the cyclotomic
  Cherednik algebra via the Dunkl-Opdam subalgebra}

\newcommand{\theshorttitle}{Representation theory via the Dunkl-Opdam subalgebra}

\renc{\theitheorem}{\Alph{itheorem}}

\excise{
\newenvironment{block}
\newenvironment{frame}
\newenvironment{tikzpicture}
\newenvironment{equation*}
}

\baselineskip=1.1\baselineskip

 \usetikzlibrary{decorations.pathreplacing,backgrounds,decorations.markings,shapes.geometric}
\tikzset{wei/.style={draw=red,double=red!40!white,double distance=1.5pt,thin}}
\tikzset{awei/.style={draw=blue,double=blue!40!white,double distance=1.5pt,thin}}
\tikzset{bdot/.style={fill,circle,color=blue,inner sep=3pt,outer
    sep=0}}
\tikzset{dir/.style={postaction={decorate,decoration={markings,
    mark=at position .8 with {\arrow[scale=1.3]{>}}}}}}
\tikzset{rdir/.style={postaction={decorate,decoration={markings,
    mark=at position .8 with {\arrow[scale=1.3]{<}}}}}}
\tikzset{edir/.style={postaction={decorate,decoration={markings,
    mark=at position .2 with {\arrow[scale=1.3]{<}}}}}}
\tikzset{tstar/.style={fill=white,draw,star,star points=5,star point ratio=0.45,inner
          sep=3pt}}
\tikzset{ucircle/.style={fill=black,circle,inner sep=2pt}}
\begin{center}
\noindent {\large  \bf \thetitle}
\medskip

\noindent {\sc Ben Webster}\footnote{Supported by the NSF under Grant
  DMS-1151473 and by NSERC under a Discovery Grant. This research was supported in part by Perimeter Institute for Theoretical Physics. Research at Perimeter Institute is supported by the Government of Canada through the Department of Innovation, Science and Economic Development Canada and by the Province of Ontario through the Ministry of Research, Innovation and Science.
}\\  
Department of Pure Mathematics\\ University of Waterloo \&\\
Perimeter Institute for Mathematical Physics\\
Waterloo, ON, Canada\\
Email: {\tt ben.webster@uwaterloo.ca}
\end{center}
\bigskip
{\small
\begin{quote}
\noindent {\em Abstract.}
We give an alternate presentation of the cyclotomic rational Cherednik
algebra, which has the useful feature of compatibility with the
Dunkl-Opdam subalgebra.  This presentation has a diagrammatic flavor,
and it provides a simple explanation of several surprising facts about
this algebra.  It allows direct proof of the connection of category
$\cO$ to weighted KLR algebras, allows us to classify the simple
Dunkl-Opdam modules over the Cherednik algebra and provides an
algebraic construction of the KZ functor.  Furthermore, one of prime
motivations  for considering this approach is to provide a better framework for connecting Cherednik
algebras to Coulomb branches of 3-d gauge theories.
\end{quote}
}

\section{Introduction}
\label{sec:introduction}

In this paper we consider the rational Cherednik algebra $\mathsf{H}$
in the cyclotomic case, i.e. that of the complex reflection group
$G(\ell,1,n)$. This is an algebra with a quite rich and interesting
representation theory; this paper is dedicated to the proposition that
this representation theory can be understood more clearly by choosing
a different presentation.  In particular, we can classify the simple {\bf
  Dunkl-Opdam modules} over the Cherednik algebra in this case.  This
is the analogue for the Cherednik algebra of Gelfand-Tsetlin modules
over $U(\mathfrak{gl}_n)$; realizing both these algebras as Coulomb
branches makes this analogy manifest.  In fact, the approach we apply
here can be generalized to any rational Galois order, as we will show
in forthcoming work \cite{WebGT}.

In Section
\ref{sec:remind-cher-algebr}, we describe the presentation needed for
our results, and
prove that it gives the Cherednik algebra.
This presentation may not look obviously simpler than the familiar one
introduced by Etingof and Ginzburg \cite[(1.15)]{EtGi}, but it does
have a graphical calculus which allows it to be described in
terms of small local relations (much like the KLR algebras
\cite{KLI,Rou2KM}).  Furthermore, it has another dramatic advantage:
it contains a manifest polynomial subalgebra defined by Dunkl and
Opdam \cite[Def. 3.7]{DuOp}.  This subalgebra commutes with the
Euler element (unlike the usual polynomial subalgebras, where all
generators have weight $\pm 1$ in the Euler grading).   While
exploited profitably in earlier papers of Dunkl and Griffeth
\cite{DuGr,GrifII}, there is much more this subalgebra can tell us about
the representation theory of these algebras.  

In Section \ref{sec:weight-klr-algebr}, we turn to using this
presentation to study the representation theory of the Cherednik
algebra, using weight spaces for the Dunkl-Opdam polynomial
subalgebra.  This allows a new interpretation of previous work of the
author relating category $\cO$ of Cherednik algebras to weighted KLR
algebras \cite{WebRou}, a key step in proving Rouquier's conjecture on
the decomposition numbers of category $\mathcal{O}$ for Cherednik
algebras (this result was proved by other methods in
\cite{RSVV,LoVV}).  That work depended on a very indirect method using
uniqueness of highest weight covers, whereas using this new
presentation, it can be proven directly. Similarly, the
Knizhnik-Zamolodchikov functor of \cite{GGOR}, which had only been
constructed analytically before, can be realized as a sum of weight
spaces for the Dunkl-Opdam polynomial
subalgebra (in particular, we can define the KZ
functor over an arbitrary characteristic 0 field, not just $\C$).
These results are only valid in characteristic $0$, but this technique
is also promising for studying the Cherednik algebra and coherent
sheaves on Hilbert schemes in characteristic $p$.

In Section
\ref{sec:coulomb-branches}, we discuss the original motivation for
this presentation: to exhibit an isomorphism between the spherical
Cherednik algebra and the Coulomb branch of a certain 3-d gauge
theory.  While this paper was in preparation, this isomorphism was
proven independently by Kodera-Nakajima \cite{KoNa}.  This isomorphism
looks quite strange in the usual presentation of the Cherednik
algebra, and quite natural in the alternate one given here. It would
be quite interesting to find a geometric description of the Cherednik
algebra like the BFN construction of the Coulomb branch
\cite{NaCoulomb,BFN}, in terms of convolution in homology.

\section*{Acknowledgements}
We thank Stephen Griffeth for pointing out the connection of this
paper to his earlier work, Joel Kamnitzer and Ivan Losev for
discussions during the development of these ideas, Hiraku Nakajima for
a number of helpful comments on an early draft of this paper and
Alexander Braverman, Pavel Etingof and Michael Finkelberg for sharing
a preliminary version of their paper on related topics.

\section{An alternate presentation}
\label{sec:remind-cher-algebr}

Let $\K$ be a field of characteristic coprime to $\ell$  and $\zeta$
be a primitive $\ell$th root of unity in $\K$.  
Let $\bbK=\K[\hbar]$.  For most purposes, we
can take $\K=\C$ and $\zeta=e^{2\pi i/\ell}$.  

Let $\Gamma$ be the group of $n\times n$ monomial matrices with
entries given by $\ell$th roots of unity; this group is a wreath
product of $S_n$ with $\Z/\ell\Z$.  It's generated by the permutation
matrices (identified with $S_n$) and the matrices
$t_j=\operatorname{diag}(1,\dots, \zeta,\dots,1)$ with
$(t_j)_{jj}=\zeta$ and all other diagonal entries 1.

Fix parameters $k,h_1,\dots, h_{\ell-1}$, with the convention that
$h_0=h_\ell=0$ and let
\begin{equation}
p(u) = \sum_{s=1}^{\ell-1}
  \sum_{r=1}^{\ell-1} \zeta^{-rs} h_ru^s.\label{eq:p-def}
\end{equation}
We can equivalently fix the values 
\begin{equation}
\ps_m=p(\zeta^m)+m\hbar\text{  for } m=0,\dots, \ell-1.  \label{eq:s-def}
\end{equation}
We'll consider the cyclotomic rational Cherednik
  algebra $\mathsf{H}$ for $\Gamma$, generated over $\bbK[\Gamma]$ by
  two alphabets of commuting variables
  $x_1,\dots, x_n,$ $y_1,\dots, y_n$.  The former transform in the
  defining representation of $\Gamma$ and the latter in its dual.
  That is:
  \begin{equation}
    \label{eq:1}
    t_i x_j=\zeta^{\delta_{ij}} x_jt_i\qquad     t_i y_j=\zeta^{-\delta_{ij}} y_jt_i
  \end{equation}
  The
  final relation is 
\[[x,y]=\hbar \langle x,y\rangle -\sum_{s\in S} c_s \langle
x,\al_s\rangle \langle \al_s^\vee,y\rangle\cdot s  \]
We will use slightly different conventions here, following the
conventions of \cite[\S 2.1.3]{GoLo}, so these relations take the form:
\begin{align}
\label{xiyi} [x_i,y_i]&=\hbar +k\sum_{j\neq i} \sum_{p=0}^{\ell-1}  t_i^pt_j^{-p} (ij) +\sum_{s=1}^{\ell-1}\sum_{r=1}^{\ell}\zeta^{-rs}(h_r-h_{r-1})
             t_i^s\\
\notag             &= \hbar +k\sum_{j\neq i} \sum_{p=0}^{\ell-1}  t_i^pt_j^{-p} (ij) +p(t_i)-p(\zeta^{-1} t_i)\\
\label{xiyj}[x_i,y_j]&=-k \sum_{p=0}^{\ell-1}  \zeta^{p} t_i^pt_j^{-p}   (ij) \qquad \qquad \qquad \qquad \qquad (i\neq j)
\end{align}
Recall that this algebra contains the modified Dunkl-Opdam operators
\begin{align}
	\DO_i&=y_ix_i+ k\sum_{j> i} \sum_{p=0}^{\ell-1}  t_i^pt_j^{-p} (ij) + p(t_i)\\
	& =x_iy_i-k\sum_{j< i} \sum_{p=0}^{\ell-1}  t_i^pt_j^{-p} (ij) + p(\zeta^{-1} t_i) -\hbar
\end{align}
These differ from those defined in \cite[(2.17)]{GrifII} by
$z_i=\DO_i-p(\zeta^{-1}t_i)-\hbar$ and the reindexing of $1,\dots, n$
by $i\mapsto n-i+1$.  Note that since $t_i$ and $z_i$ generate a
commutative subalgebra, these elements $\DO_i$ commute with each other
and with $t_i$ (and generate the same subalgebra).  Accounting for
reindexing, the equation \cite[(3.5)]{GrifII} implies that if we let $r_j=(j,j+1)$:
\begin{equation}
	\DO_{r_j\cdot i}r_j-r_j\DO_{i} =
        \begin{cases}
          k\ell \pi_{j,j+1}&i=j\\
          -k\ell \pi_{j,j+1}&i=j+1\\
          0 & j\notin \{i,i-1\}
        \end{cases}
\label{degHeckeDO}
\end{equation}   
where $\pi_{j,m}= \frac{1}\ell\big (\sum_{p=0}^{\ell-1}
t_j^pt_{m}^{-p}\big) $ is the projection to the invariants of
$t_jt_m^{-1}$.  Let $\mathcal{D\!O}_n$ denote the algebra generated by
$\K \Gamma$ and $\DO_i$ modulo the relations \eqref{degHeckeDO}.

Consider the free $\bbK$ algebra $\tilde{A}$ generated by the group algebra
$\bbK\Gamma$ and the symbols
$\sigma, \tau,$ and $u_i$ for $i=1,\dots, n$.    We define $u_i,t_i\in \tilde{A}$ for any
$i\in\Z$ by the rule $u_i=u_{i-n}+\hbar$, and $t_i=t_{i-n}\zeta^{-1}$.  \begin{definition}
  We let $\alg$ be the quotient of this algebra by the relations
  \eqref{degHeckeDO} and:\newseq
  \begin{align*}
   u_it_j&=t_ju_i & i,j&\in \Z\subeqn \\
    \sigma r_{j-1}&=r_j\sigma& j&=2,\dots, n-1\label{isotope1}\subeqn\\
    \tau r_j&=r_{j-1}\tau& j&=2,\dots, n-1\label{isotope2}\subeqn\\
        \sigma^2 r_{n}&=r_1\sigma^2&\label{isotope3}\subeqn\\
    \tau^2 r_1&=r_{n}\tau^2& \label{isotope4}\subeqn\\
    \sigma\tau&=u_1-p(\zeta ^{-1} t_1)+\hbar \subeqn\label{sigtau}\\
    \tau\sigma&=u_n-p(t_n) \subeqn\label{tausig}\\
u_iu_j&=u_ju_i & i,j&\in \Z\label{u-commute}\subeqn\\
    u_i\sigma&=\sigma u_{i-1}& i&\in \Z\label{u-sig}\subeqn\\
    u_i\tau&=\tau u_{i+1}& i&\in \Z\label{u-tau}\subeqn\\
    t_i\sigma&=\sigma t_{i-1}& i&\in \Z\label{t-sig}\subeqn\\
    t_i\tau&=\tau t_{i+1}& i&\in \Z\label{t-tau}\subeqn\\
    \tau (1,2) \sigma &= \sigma (n-1,n) \tau +k \big(\sum_{p=0}^{\ell-1}\zeta^p t_n^pt_{1}^{-p}\big)\label{last}\subeqn
  \end{align*}
\end{definition}
\begin{remark}
 Note that these relations are closely related to those for the degenerate DAHA
  given in \cite[Def. 2.1]{BEF}, and should be regarded as a higher
  level version of this presentation.  
\end{remark}

We can represent these elements graphically as string diagrams on a
cylinder with a seam. We'll draw these on the page with the cylinder
cut along the seam.  The generators are: 
\begin{equation*}
    \tikz{
      \node[label=below:{$t_m$}] at (-4.5,0){ 
       \tikz[very thick,xscale=1.2]{
          \draw[dashed,thick] (-.7,-.5)-- (-.7,.5);
          \draw[dashed,thick] (1.7,-.5)-- (1.7,.5);
\draw (-.5,-.5)-- (-.5,.5);
          \draw (.5,-.5)-- (.5,.5) node [midway,tstar]{};
          \draw (1.5,-.5)-- (1.5,.5);
          \node at (1,0){$\cdots$};
          \node at (0,0){$\cdots$};
        }
      };
      \node[label=below:{$u_m$}] at (0,0){ 
        \tikz[very thick,xscale=1.2]{
          \draw[dashed,thick] (-.7,-.5)-- (-.7,.5);
          \draw[dashed,thick] (1.7,-.5)-- (1.7,.5);
          \draw (-.5,-.5)-- (-.5,.5);
          \draw (.5,-.5)-- (.5,.5) node [midway,fill=black,circle,inner
          sep=2pt]{};
          \draw (1.5,-.5)-- (1.5,.5);
          \node at (1,0){$\cdots$};
          \node at (0,0){$\cdots$};
        }
      };
      \node[label=below:{$(m,m+1)$}] at (4.5,0){ 
        \tikz[very thick,xscale=1.2]{
          \draw[dashed,thick] (-.7,-.5)-- (-.7,.5);
          \draw[dashed,thick] (1.7,-.5)-- (1.7,.5);
          \draw (-.5,-.5)-- (-.5,.5);
          \draw (.1,-.5)-- (.9,.5);
          \draw (.9,-.5)-- (.1,.5);
          \draw (1.5,-.5)-- (1.5,.5);
          \node at (1,0){$\cdots$};
          \node at (0,0){$\cdots$};
        }
      };
    }
  \end{equation*}
\begin{equation*}
    \tikz{
      \node[label=below:{$\sigma$}] at (-4.5,0){ 
       \tikz[very thick,xscale=1.2]{
          \draw[dashed,thick] (-.7,-.5)-- (-.7,.5);
          \draw[dashed,thick] (1.7,-.5)-- (1.7,.5);
\draw (-.5,-.5)-- (-.1,.5);
          \draw (.3,-.5)-- (.7,.5);
          \draw (1.1 ,-.5)-- (1.5,.5);
          \node at (.9,0){$\cdots$};
\draw (-.7,0)-- (-.5,.5);
           \draw (1.5 ,-.5)-- (1.7,0);
          \node at (.1,0){$\cdots$};
        }
      };
      \node[label=below:{$\tau$}] at (0,0){ 
        \tikz[very thick,xscale=1.2]{
          \draw[dashed,thick] (-.7,-.5)-- (-.7,.5);
          \draw[dashed,thick] (1.7,-.5)-- (1.7,.5);
 \draw (-.5,.5)-- (-.1,-.5);
          \draw (.3,.5)-- (.7,-.5);
          \draw (1.1 ,.5)-- (1.5,-.5);
          \node at (.9,0){$\cdots$};
\draw (-.7,0)-- (-.5,-.5);
           \draw (1.5 ,.5)-- (1.7,0);
          \node at (.1,0){$\cdots$};
        }
      };
    }
  \end{equation*}

The relations of $\Gamma$ and (\ref{degHeckeDO}--\ref{last}) are
determined by simple local rules such as:
\begin{equation*}
  \tikz[very thick,scale=1.5,baseline]{  
          \draw (.1,-.5)--node [pos=.25,ucircle]{} (.9,.5);
          \draw (.9,-.5)-- (.1,.5);
}-\tikz[very thick,scale=1.5,baseline]{  
          \draw(.1,-.5)--  node [pos=.75,ucircle] {}(.9,.5);
          \draw (.9,-.5)-- (.1,.5);
}=  \tikz[very thick,scale=1.5,baseline]{  
          \draw (.1,-.5)-- (.9,.5);
          \draw(.9,-.5)-- node [pos=.75,ucircle] {} (.1,.5);
} -\tikz[very thick,scale=1.5,baseline]{  
          \draw (.1,-.5)-- (.9,.5);
          \draw (.9,-.5)--node [pos=.25,ucircle]{} (.1,.5);
}=k \,\,\tikz[very thick,scale=1.5,baseline]{  
          \draw (.3,-.5)-- (.3,.5);
          \draw   (.7,-.5)-- (.7,.5);
}+k \,\,\tikz[very thick,scale=1.5,baseline]{  
          \draw (.3,-.5)-- node [midway,tstar]{}  (.3,.5);
          \draw   (.7,-.5)-- node [midway,tstar]{}  (.7,.5);
\node[scale=.7] at (.9,.2) {$-1$};
}+\cdots+k \,\,\tikz[very thick,scale=1.5,baseline]{  
          \draw (.3,-.5)-- node [midway,tstar]{}  (.3,.5);
          \draw   (.7,-.5)-- node [midway,tstar]{}  (.7,.5);
\node[scale=.7] at (.1,.2) {$-1$};}
\end{equation*}
\begin{equation*}
  \tikz[very thick,scale=1.5,baseline]{  
\draw[dashed,thick] (0,-.5)-- (0,.5);
\draw (-.4,-.5) --  node [pos=.75,tstar]{} (.4,.5);
}= \zeta\tikz[very thick,scale=1.5,baseline]{  
\draw[dashed,thick] (0,-.5)-- (0,.5);
\draw (-.4,-.5) --  node [pos=.25,tstar]{} (.4,.5);
} \qquad \qquad \qquad\qquad  \tikz[very thick,scale=1.5,baseline]{  
\draw[dashed,thick] (0,-.5)-- (0,.5);
\draw (.4,-.5) --  node [pos=.75,tstar]{} (-.4,.5);
}= \zeta^{-1}\tikz[very thick,scale=1.5,baseline]{  
\draw[dashed,thick] (0,-.5)-- (0,.5);
\draw (.4,-.5) --  node [pos=.25,tstar]{} (-.4,.5);
}\end{equation*}
\begin{equation*}
 \tikz[very thick,scale=1.5,baseline]{  
\draw[dashed,thick] (0,-.5)-- (0,.5);
\draw (-.4,-.5) --  node [pos=.75,ucircle]{} (.4,.5);
}= \tikz[very thick,scale=1.5,baseline]{  
\draw[dashed,thick] (0,-.5)-- (0,.5);
\draw (-.4,-.5) --  node [pos=.25,ucircle]{} (.4,.5);
}-\hbar \tikz[very thick,scale=1.5,baseline]{  
\draw[dashed,thick] (0,-.5)-- (0,.5);
\draw (-.4,-.5) --  (.4,.5);
}\qquad \qquad   \tikz[very thick,scale=1.5,baseline]{  
\draw[dashed,thick] (0,-.5)-- (0,.5);
\draw (.4,-.5) --  node [pos=.75,ucircle]{} (-.4,.5);
}= \tikz[very thick,scale=1.5,baseline]{  
\draw[dashed,thick] (0,-.5)-- (0,.5);
\draw (.4,-.5) --  node [pos=.25,ucircle]{} (-.4,.5);
}+\hbar \tikz[very thick,scale=1.5,baseline]{  
\draw[dashed,thick] (0,-.5)-- (0,.5);
\draw (.4,-.5) -- (-.4,.5);
}
\end{equation*}
\begin{equation*}
  \tikz[very thick,scale=1,baseline=-3pt]{  
\draw[dashed,thick] (0,-.75)-- (0,.75);
\draw  (-.4,-.75) to[out=90,in=-90] (.4,0) to[out=90,in=-90] (-.4,.75);
}= \tikz[very thick,scale=1,baseline=-3pt]{  
\draw[dashed,thick] (0,-.75)-- (0,.75);
\draw  (-.4,-.75)-- node [midway,ucircle]{} (-.4,.75);
}\,\,-p\Bigg(\,\, \tikz[very thick,scale=1,baseline=-3pt]{  
\draw[dashed,thick] (0,-.75)-- (0,.75);
\draw  (-.4,-.75)-- node [midway,tstar]{} (-.4,.75);
}\,\,\Bigg)\qquad \qquad   \tikz[very thick,scale=1,baseline=-3pt]{  
\draw[dashed,thick] (0,-.75)-- (0,.75);
\draw  (.4,-.75) to[out=90,in=-90] (-.4,0) to[out=90,in=-90] (.4,.75);
}= \tikz[very thick,scale=1,baseline=-3pt]{  
\draw[dashed,thick] (0,-.75)-- (0,.75);
\draw  (.4,-.75)-- node [midway,ucircle]{} (.4,.75);
}\,\, +\hbar\, \,\tikz[very thick,scale=1,baseline=-3pt]{  
\draw[dashed,thick] (0,-.75)-- (0,.75);
\draw  (.4,-.75)-- (.4,.75);
}\,\,-p\Bigg(\zeta^{-1}\tikz[very thick,scale=1,baseline=-3pt]{  
\draw[dashed,thick] (0,-.75)-- (0,.75);
\draw  (.4,-.75)-- node [midway,tstar]{} (.4,.75);
}\Bigg)
\end{equation*}

\nc{\cycle}{\chi}
\nc{\ncycle}{\upsilon}
Consider the permutations $\cycle_i=(i,i-1\dots, 1)$ and $\ncycle_i=(i,i+1,\dots, n)$.

\begin{theorem}\label{Cherednik-iso}
  The algebras $\alg$ and $\mathsf{H}$ are isomorphic via maps
  identifying the copies of $\bbK[\Gamma]$ and sending 
\begin{equation*}
  x_i\mapsto\cycle_i \sigma \ncycle_i^{-1} \qquad y_i\mapsto
  \ncycle_i \tau \cycle_i^{-1}\qquad \DO_i\mapsto u_i.
\end{equation*}  
\end{theorem}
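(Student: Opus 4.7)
\textbf{The plan} is to construct mutually inverse $\bbK$-algebra homomorphisms $\Psi\colon\mathsf{H}\to\alg$ (the one given in the statement) and $\Phi\colon\alg\to\mathsf{H}$, both identifying the copies of $\bbK[\Gamma]$. I define $\Phi$ on generators by
\[
\sigma\mapsto x_1\ncycle_1,\qquad \tau\mapsto y_n\cycle_n,\qquad u_1\mapsto x_1y_1-p(t_1),
\]
with the remaining $u_i$ determined inductively from $u_1$ via (\ref{degHecke}); equivalently, $\Phi(u_i)$ is the $i$th Dunkl--Opdam element. The formula for $\Phi(u_1)$ is forced by $\sigma\tau=u_1+p(t_1)$, since $\ncycle_1y_n=y_1\ncycle_1$ and $\ncycle_1\cycle_n=\id$, so $\sigma\tau\mapsto x_1y_1$.

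\textbf{Well-definedness of $\Psi$.} I verify the three families of Cherednik relations on the proposed images. Commutativity $[\Psi(x_i),\Psi(x_j)]=[\Psi(y_i),\Psi(y_j)]=0$ and the $\Gamma$-equivariance of $\Psi(x_i),\Psi(y_i)$ reduce, by conjugation with $\cycle_i,\ncycle_i$ and the use of the isotope relations (\ref{isotope1})--(\ref{isotope2}) together with the sliding relations (\ref{t-sig})--(\ref{t-tau}), to small adjacent-index cases that follow from (\ref{u-commute}) and the braid relations in $S_n$. The commutator $[\Psi(x_i),\Psi(y_j)]$ similarly reduces to two essential cases: the diagonal $[\Psi(x_1),\Psi(y_1)]=\sigma\tau-\ncycle_1\tau\sigma\ncycle_1^{-1}$, which is computed by inserting the defining values of $\sigma\tau$ and $\tau\sigma$ and sliding $\ncycle_1$ across using (\ref{u-sig})--(\ref{t-tau}); and the off-diagonal $[\Psi(x_1),\Psi(y_n)]$, which is precisely the content of (\ref{last}).

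\textbf{Well-definedness of $\Phi$.} Relations (\ref{u-commute})--(\ref{t-tau}) encode standard properties of the Dunkl--Opdam subalgebra and its compatibility with the shift operators $x_1\ncycle_1,\,y_n\cycle_n$, while (\ref{degHecke}) is the defining intertwining identity for Dunkl--Opdam elements under simple transpositions. The relations (\ref{isotope1})--(\ref{isotope2}) are pure $S_n$ identities. The identity $\sigma\tau=u_1+p(t_1)$ holds by the construction of $\Phi(u_1)$; the identity $\tau\sigma=u_n+\hbar+p(\zeta^{-1}t_n)$ follows after translating it to $y_nx_n$ in $\mathsf{H}$ via the Cherednik commutator $[x_n,y_n]$.

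\textbf{Inverse check and main obstacle.} Once both maps are defined, $\Phi\circ\Psi$ fixes $x_i$ and $y_i$ via the direct identities $\cycle_i x_1\ncycle_1\ncycle_i^{-1}=x_i$ and $\ncycle_i y_n\cycle_n\cycle_i^{-1}=y_i$, while $\Psi\circ\Phi$ fixes $\sigma,\tau,u_1$ by inspection. \textbf{The main obstacle} is relation (\ref{last}): under $\Phi$ it becomes
\[
y_n\cycle_n(1,2)x_1\ncycle_1 \;=\; x_1\ncycle_1(n-1,n)y_n\cycle_n - k\textstyle\sum_{p=0}^{\ell-1}\zeta^p t_n^pt_1^{-p}
\]
in $\mathsf{H}$, which requires sliding $\cycle_n(1,2)$ past $x_1$ and $\ncycle_1(n-1,n)$ past $y_n$ via the off-diagonal Cherednik commutators, then telescoping the long cycles and matching residual reflection terms. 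This careful bookkeeping of long cyclic permutations against the many off-diagonal commutators is the most delicate step of the proof.
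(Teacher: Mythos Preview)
Your strategy is essentially the paper's: verify that $x_i\mapsto\cycle_i\sigma\ncycle_i^{-1}$ and $y_i\mapsto\ncycle_i\tau\cycle_i^{-1}$ respect the Cherednik relations (the paper reduces $[x_i,y_i]$ to the $\sigma\tau,\tau\sigma$ relations and $[x_i,y_j]$ to (\ref{last}), exactly as you do), then exhibit the inverse on generators. You go further than the paper by also verifying that $\Phi$ respects the relations of $A$; the paper simply writes down the formulas \eqref{eq:sigma-image}--\eqref{eq:u-image} and asserts ``so this map is an isomorphism'' without checking them. (Incidentally, your $\Phi(u_1)=x_1y_1-p(t_1)$ is the one forced by $\sigma\tau=u_1+p(t_1)$; the sign in \eqref{eq:u-image} is opposite.)

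One claim is too quick. You say $[\Psi(x_i),\Psi(x_j)]=0$ reduces via the isotope relations to adjacent cases that ``follow from (\ref{u-commute}) and the braid relations in $S_n$.'' It does not: after simplification one finds $[\Psi(x_1),\Psi(x_2)]=0$ is equivalent to $(1,2)\,\sigma^2=\sigma^2(n-1,n)$, and the isotope relation (\ref{isotope1}) only slides $(j,j+1)$ through $\sigma$ for $1\le j\le n-2$; it says nothing about $(1,2)$ on the left of $\sigma$ or $(n-1,n)$ on its right. This identity genuinely requires (\ref{last}) together with the $\sigma\tau,\tau\sigma$ relations. The paper omits this check entirely. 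Fortunately your proof does not actually depend on it: once $\Phi$ is a well-defined homomorphism and $\Phi\circ\Psi$ is the identity on the generators $x_i,y_i,\Gamma$, the map $\Psi$ automatically factors through $\mathsf{H}$ (hence kills $[x_i,x_j]$) without any direct computation. So the verification of $\Phi$---and in particular your ``main obstacle'' (\ref{last}), which unwinds precisely to the off-diagonal Cherednik commutator $[x_1,y_n]$---is the load-bearing step, and the separate check of $[\Psi(x_i),\Psi(x_j)]=0$ can be dropped.
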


The elements $\cycle_i \sigma \ncycle_i^{-1} $ and $\ncycle_i \tau
\cycle_i^{-1}$ have natural graphical representations:
\begin{equation*}
    \tikz{
      \node[label=below:{$ \cycle_i \sigma \ncycle_i^{-1} $}] at (-4.5,0){ 
       \tikz[very thick,xscale=1.2]{
          \draw[dashed,thick] (-.7,-.5)-- (-.7,.5);
          \draw[dashed,thick] (1.7,-.5)-- (1.7,.5);
\draw (-.5,-.5)-- (-.5,.5);
          \draw (.3,-.5)-- (.3,.5);
          \draw (.7,-.5)-- (.7,.5);
          \draw (1.5 ,-.5)-- (1.5,.5);
          \node at (1.1,.3){$\cdots$};
\draw (-.7,0) to[out=0,in=-90] (.5,.5);
           \draw (.5 ,-.5)to[out=90,in=180] (1.7,0);
          \node at (-.1,-.3){$\cdots$};
        }
      };
      \node[label=below:{$
  \ncycle_i \tau \cycle_i^{-1}$}] at (0,0){ 
        \tikz[very thick,xscale=1.2]{
          \draw[dashed,thick] (-.7,-.5)-- (-.7,.5);
          \draw[dashed,thick] (1.7,-.5)-- (1.7,.5);
 \draw (-.5,.5)-- (-.5,-.5);
          \draw (.7,.5)-- (.7,-.5);
          \draw (.3,.5)-- (.3,-.5);
          \draw (1.5 ,.5)-- (1.5,-.5);
          \node at (1.1,-.3){$\cdots$};
\draw (-.7,0)to[out=0,in=90] (.5,-.5);
           \draw (.5 ,.5) to[out=-90,in=180] (1.7,0);
          \node at (-.1,.3){$\cdots$};
        }
      };
    }
  \end{equation*}

\begin{proof}
First we need to check the compatibility of this map with the action
of $\Gamma$.  Note that the images of $x_i$ and $y_i$ commute with
transpositions except $(i,i\pm  1)$, and
\begin{align*}
  (i,i\pm 1)\cycle_i\sigma \ncycle_i^{-1} (i,i\pm 1) =   \cycle_{i\pm
                                                       1}\sigma
                                      \ncycle_{i\pm 1}^{-1}\qquad  (i,i\pm 1) \ncycle_i
                                                      \tau\cycle_i^{-1}(i,i\pm 1)=
                                                                            \ncycle_{i\pm
  1}\tau\cycle_{i\pm 1}^{-1}
\end{align*}
This establishes equivariance for $S_n\subset \Gamma$.  Furthermore, 
\begin{align*}
  \cycle_i\sigma \ncycle_i^{-1} t_i &=   \cycle_i\sigma t_n\ncycle_i^{-1}&   \ncycle_i\tau\cycle_i^{-1}t_i&= \ncycle_i\tau t_1
                                                                            \cycle_i^{-1}\\
&=   \cycle_i t_{n+1}\sigma 
                                      \ncycle_i^{-1}&&=
                                                                           \ncycle_i t_0\tau\cycle_i^{-1}\\
&=  t_{n+i} \cycle_i \sigma 
                                      \ncycle_i^{-1}&&=
                                                                           t_{i-n}\ncycle_i \tau\cycle_i^{-1}\\
&=  \zeta^{-1} t_{i} \cycle_i \sigma 
                                      \ncycle_i^{-1}&&=
                                                                           \zeta
                                                       t_{i}\ncycle_i \tau\cycle_i^{-1}.
\end{align*}
Similar calculations show that these elements commute with the other
$t_j$'s.  Thus, these elements have the correct commutation relations
with $\Gamma$ and we need only check that they have the correct
commutator with each other.

First, let us check that the images of $x_i$ and $x_j$ commute; we can assume that $j>i$.  Thus, we have that: 
\begin{align*}
\cycle_j \sigma \ncycle_j^{-1}\cycle_i \sigma \ncycle_i^{-1}&= \cycle_j \sigma \cycle_i \ncycle_j^{-1}\sigma \ncycle_i^{-1}\\
&= \cycle_j (i+1,i,\dots,2)\sigma^2  (j-1,j,\dots,n-1) \ncycle_i^{-1}\\
&= \cycle_i (j+1,i,\dots,2)\sigma^2  (i-1,j,\dots,n-1) \ncycle_j^{-1}\\
&= \cycle_i \sigma \ncycle_i^{-1}\cycle_j\sigma \ncycle_j^{-1}.
\end{align*}
This proof is perhaps easier to imagine using a picture:
\begin{equation*}
\tikz[very thick,xscale=1.2,baseline=10pt]{
          \draw[dashed,thick] (-.7,-.5)-- (-.7,1.5);
          \draw[dashed,thick] (1.7,-.5)-- (1.7,1.5);
\draw (-.5,-.5)-- (-.5,1.5);
          \draw (.5,-.5)-- (.5,1.5);
          \draw (.7,-.5)-- (.7,.5);
                    \draw (.3,.5)-- (.3,1.5);
          \draw (1.5 ,-.5)-- (1.5,1.5);
          \node at (1.1,.3){$\cdots$};
                    \node at (1.1,1.3){$\cdots$};
\draw (-.7,0) to[out=0,in=-90] (.3,.5);
           \draw (.3 ,-.5)to[out=90,in=180] (1.7,0);
           \draw (-.7,1) to[out=0,in=-90] (.7,1.5);
           \draw (.7 ,.5)to[out=90,in=180] (1.7,1);
          \node at (-.1,-.3){$\cdots$};
          \node at (-.1,.7){$\cdots$};}\,=\, \tikz[very thick,xscale=1.2,baseline=10pt]{
          \draw[dashed,thick] (-.7,-.5)-- (-.7,1.5);
          \draw[dashed,thick] (1.7,-.5)-- (1.7,1.5);
        \draw (-.5 ,-.5) to [in=-90,out=90] (-.5,0) to [in=-90,out=90] (-.1,.7) to[in=-90,out=90]  (-.5,1.5);          \draw (.5,-.5)-- (.5,1.5);         
          \draw (1.5 ,-.5) to [in=-90,out=90] (1.1,.3) to [in=-90,out=90] (1.5,1) to[in=-90,out=90]  (1.5,1.5);
                    \node at (1.1,1.3){$\cdots$};
\draw (-.7,.42) to[out=30,in=-150] (.3,1.5);
           \draw (.3 ,-.5)to[out=60,in=210] (1.7,.42);
                      \draw (.7 ,-.5)to[out=30,in=210] (1.7,.58);
           \draw (-.7,.58) to[out=30,in=-120] (.7,1.5);
          \node at (-.1,-.3){$\cdots$};} \,=\, \tikz[very thick,xscale=1.2,baseline=10pt]{
          \draw[dashed,thick] (-.7,-.5)-- (-.7,1.5);
          \draw[dashed,thick] (1.7,-.5)-- (1.7,1.5);
        \draw (-.5 ,-.5) to [in=-90,out=90] (-.5,0) to [in=-90,out=90] (-.1,.7) to[in=-90,out=90]  (-.5,1.5);          \draw (.5,-.5)-- (.5,1.5);
          \draw (1.5 ,-.5) to [in=-90,out=90] (1.1,.3) to [in=-90,out=90] (1.5,1) to[in=-90,out=90]  (1.5,1.5);
                    \node at (1.1,1.3){$\cdots$};
\draw (-.7,.58) to[out=30,in=-150] (.3,1.5);
           \draw (.3 ,-.5)to[out=60,in=210] (1.7,.58);
                      \draw (.7 ,-.5)to[out=30,in=210] (1.7,.42);
           \draw (-.7,.42) to[out=30,in=-120] (.7,1.5);
          \node at (-.1,-.3){$\cdots$};}\,=\,
          \tikz[very thick,xscale=1.2,baseline=10pt]{
          \draw[dashed,thick] (-.7,-.5)-- (-.7,1.5);
          \draw[dashed,thick] (1.7,-.5)-- (1.7,1.5);
\draw (-.5,-.5)-- (-.5,1.5);
          \draw (.5,-.5)-- (.5,1.5);
          \draw (.3,-.5)-- (.3,.5);
                    \draw (.7,.5)-- (.7,1.5);
          \draw (1.5 ,-.5)-- (1.5,1.5);
          \node at (1.1,.3){$\cdots$};
                    \node at (1.1,1.3){$\cdots$};
\draw (-.7,0) to[out=0,in=-90] (.7,.5);
           \draw (.7 ,-.5)to[out=90,in=180] (1.7,0);
           \draw (-.7,1) to[out=0,in=-90] (.3,1.5);
           \draw (.3 ,.5)to[out=90,in=180] (1.7,1);
          \node at (-.1,-.3){$\cdots$};
          \node at (-.1,.7){$\cdots$};}
\end{equation*}
The key relation is (\ref{isotope3}) which we use in the middle equality.

Next, we consider the commutation relation between $x_i$ and $y_i$, given in \eqref{xiyi}.  This we will prove in a few steps:
\begin{align}
  [\cycle_i \sigma \ncycle_i^{-1} ,\ncycle_i \tau
  \cycle_i^{-1}]&=\cycle_i \sigma \tau \cycle_i^{-1}-\ncycle_i \tau
                  \sigma \ncycle_i^{-1}\notag\\
&=\cycle_i (u_1-p(\zeta^{-1}t_1)+\hbar)\cycle_i^{-1}-\ncycle_i (u_n-p(t_n) )\ncycle_i^{-1} \label{eq:commutator1}\\
&=\hbar +p(t_i)-p(\zeta
  ^{-1}t_i)  +\cycle_i u_1 \cycle_i^{-1}-\ncycle_i u_n\ncycle_i
  ^{-1}.\notag
\end{align}
Note that 
\begin{multline}
p(t_i)-p(\zeta
  ^{-1}t_i) \\=
  \sum_{s=1}^{\ell-1}
  \sum_{r=0}^\ell\zeta^{-rs}h_rt_i^s-\sum_{s=1}^{\ell-1}
  \sum_{r=0}^\ell\zeta^{-(r+1)s}h_rt_i^s=\sum
  \sum\zeta^{-rs}(h_r-h_{r-1})t_i^s.\label{eq:commutator2}
\end{multline}

Similarly, 
\begin{align}
\cycle_i u_1 \cycle_i^{-1}&= u_i+k\sum_{j=1}^{i-1}\sum_{p=0}^{\ell-1} t_j^pt_i^{-p}(j,i); \\\ncycle_i u_n\ncycle_i ^{-1}&=
u_i-k\sum_{j=i+1}^{n}\sum_{p=0}^{\ell-1}  t_j^pt_i^{-p}(j,i) .\label{eq:commutator3}
\end{align}
Thus, combining (\ref{eq:commutator1}--\ref{eq:commutator3}), we can confirm \eqref{xiyi} as follows: 
\begin{equation}
  \label{eq:cherednik1}
    [\cycle_i \sigma \ncycle_i^{-1} ,\ncycle_i \tau
  \cycle_i^{-1}] =\hbar +\sum_{s=1}^{\ell-1}
  \sum_{r=0}^\ell\zeta^{-rs}(h_r-h_{r-1})t_i^s+k\sum_{i\neq j}\sum_{p=0}^{\ell-1}  t_j^pt_i^{-p}(j,i).
\end{equation}
Similarly, if $i\neq j$, then \begin{equation}
  \label{eq:cherednik2}
    [\cycle_i \sigma \ncycle_i^{-1} ,\ncycle_j \tau
  \cycle_j^{-1}] =\cycle_i \ncycle_j(\sigma (1,2) \tau -\tau (n-1,n)
  \sigma) \ncycle_i^{-1} \cycle_j^{-1}=-k \sum_{p=0}^{\ell-1}  \zeta^p t_i^pt_j^{-p}(i,j).
\end{equation} This confirms \eqref{xiyj}.

Thus, we have verified the existence of a map $\mathsf{H} \to \alg$.
Note that \[\DO_1=x_1y_1+p(\zeta^{-1}t_1)-\hbar\mapsto \sigma\tau+p(\zeta^{-1}t_1)-\hbar=u_1.\] By the  relations (\ref{degHeckeDO}) and (\ref{degHeckeDO}), this implies $\DO_i\mapsto u_i$ for all $i$.
  
The inverse is defined by 
\begin{equation}
    \sigma \mapsto (1,\dots, i)x_i(i,\dots,n)\qquad 
\tau \mapsto (n,\dots, i)y_i(i,\dots,1) \qquad u_i\mapsto \DO_i
\label{eq:sigma-image}
  \end{equation}
so this map is an isomorphism.
\end{proof}
\begin{lemma}\label{Euler}
  Under this isomorphism, the deformed Euler element $\mathsf{eu}$ of the
  Cherednik algebra matches $u_1+\cdots +u_n+n/2$.
\end{lemma}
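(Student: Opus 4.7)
My approach is to compute $\sum_{i=1}^n u_i$ directly using the explicit image formulas (\ref{eq:u-image}). Averaging the two expressions for the image of $u_i$ given there yields
\[
  u_i = \tfrac{1}{2}(x_iy_i + y_ix_i) + \tfrac{1}{2}\big(p(t_i) + p(\zeta^{-1}t_i)\big) + \tfrac{\hbar}{2} + \tfrac{k}{2}\Big[\sum_{j<i} - \sum_{j>i}\Big]\sum_{p=0}^{\ell-1} t_j^pt_i^{-p}(ij).
\]
When we sum over $i$, the double sum of reflections cancels: for each unordered pair $\{a,b\}$, the transposition $(ab)$ appears with coefficient $\tfrac{k}{2}\sum_p\big(t_a^p t_b^{-p} - t_b^p t_a^{-p}\big)$, and reindexing $p\mapsto -p$ (using that the $t$'s commute) shows the two sums over $p$ are equal. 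Thus
\[
  \sum_i u_i \;=\; \tfrac{1}{2}\sum_i(x_iy_i + y_ix_i) + \tfrac{1}{2}\sum_i\big(p(t_i) + p(\zeta^{-1}t_i)\big) + \tfrac{n\hbar}{2}.
\]

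As an alternative and perhaps cleaner route, one can first characterize $\sum u_i$ up to a central constant by its commutation relations. Relation (\ref{degHecke}) combined with $\sum_i\al_j^\vee(\epsilon_i)=0$ shows $\sum u_i$ commutes with the transpositions, and it commutes with each $t_j$ by (\ref{u-commute}), so it is $\Gamma$-central. Using (\ref{u-sig}) together with $u_i=u_{i-n}-\hbar$, one computes $\sum_i u_i\cdot\sigma = \sigma\sum_{i=1}^n u_{i-1} = \sigma(\sum_i u_i + \hbar)$, giving $[\sum u_i,\sigma]=\hbar\sigma$; symmetrically $[\sum u_i,\tau]=-\hbar\tau$. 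Since $x_i = \cycle_i\sigma\ncycle_i^{-1}$ and $y_i=\ncycle_i\tau\cycle_i^{-1}$ and $\sum u_i$ commutes with $\Gamma$, we get $[\sum u_i,x_i]=\hbar x_i$ and $[\sum u_i,y_i]=-\hbar y_i$. These are the defining commutation properties of the deformed Euler element, so $\mathsf{eu}-\sum u_i$ lies in the center; the explicit formula above then pins down the value of that constant.

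The main obstacle is purely bookkeeping: matching the right-hand side of the summed formula against the particular normalization of $\mathsf{eu}$ used in \cite[\S 2.1.3]{GoLo}. The symmetric quadratic part $\tfrac{1}{2}\sum_i(x_iy_i+y_ix_i)$ is the standard undeformed Euler operator, while $\tfrac{1}{2}\sum_i(p(t_i)+p(\zeta^{-1}t_i))$ should reproduce, by the definition of $p(u)$, the reflection-theoretic contribution in the $h_r$ parameters. After checking those conventions carefully, the residual shift between $n\hbar/2$ and the claimed $n/2$ is fixed by the choice of normalization in the definition of $\mathsf{eu}$.
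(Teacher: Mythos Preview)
Your direct computation from (\ref{eq:u-image}) is exactly the paper's approach: its entire proof is the one sentence ``This follows immediately from (\ref{eq:u-image}) and the formula for the deformed Euler element given in \cite[\S 2.3.5]{GoLo},'' and you have simply carried out the summation and cancellation that the paper leaves implicit. Your alternative argument via commutation relations---using (\ref{degHecke}) and (\ref{u-sig})--(\ref{u-tau}) to show $\sum u_i$ is $\Gamma$-central with $[\sum u_i,\sigma]=\hbar\sigma$ and $[\sum u_i,\tau]=-\hbar\tau$, hence differs from $\mathsf{eu}$ by a central scalar---is not in the paper and is a clean conceptual bonus that reduces everything to identifying a single constant. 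Both routes ultimately defer the final constant to the conventions of \cite{GoLo}, just as the paper does.
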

\begin{proof}
  This follows immediately from the fact that $\DO_i\mapsto u_i$ and the formula for the deformed Euler element given in \cite[\S 2.3.5]{GoLo}.
\end{proof}

Thus, considering
the simultaneous eigenspaces of these operators gives a finer
decomposition of the Euler eigenspaces, which we will study in the
following section.
\begin{remark}
  The map of commutator with $\mathsf{eu}$ is semi-simple on
  $\mathsf{H}$, with all eigenvalues in $\Z$.  Thus, this conjugation
  induces a $\Z$-grading on $\mathsf{H}$, which is easy to describe in
  the presentation given above: the elements $\sigma$ and $\tau$ have
  degrees $1$ and $-1$, respectively, and all other generators have
  degree 0; we leave the verification of this based on the relations
  (\ref{degHeckeDO}--\ref{last}) to the reader.  Thus, in terms of
  diagrams, this grading measures the total winding number around the cylinder. 
\end{remark}

\begin{lemma}
  The elements $u_i,t_i$ for $i=1,\dots, n$ generate a subring $\mathsf{U}$
  of $A$ isomorphic
  to $\bbK[u_1,\dots, u_n,t_1,\dots, t_n]/(t_1^\ell-1, \dots, t_n^\ell-1)$.
\end{lemma}
\begin{proof}
  Obviously, the $t_i$ generate a copy of the the group ring on
  $(\Z/\ell\Z)^n$.  
  The elements $u_i$ commute by \eqref{u-commute}.  Furthermore, their images
  in the associated graded $\operatorname{gr} \mathsf{H}\cong \bbK[\Gamma]\otimes
  \bbK[\mathbf{x},\mathbf{y}]$ are given by $x_1y_1,\dots, x_ny_n$.
  Since these are algebraically independent over the group algebra, the $u_i$ are as well,
  and so they generate a copy of the polynomial ring.
\end{proof}

The subring $\mathsf{U}$ has another special property:
\begin{lemma}\label{lem:HC}
  The subalgebra $\mathsf{U}\subset \mathsf{H}$ is Harish-Chandra in the sense of \cite[\S
  1.3]{FOD}, that is, for any $a\in \mathsf{H}$, the bimodule
  $\mathsf{U}a \mathsf{U}$ is finitely generated as a left module or
  right module.  
\end{lemma}
\begin{proof}
  This is easily seen from the fact that for each fixed element of the
  affine Weyl group, the diagrams tracing out affine permutations with
  all possible decorations by dots and stars form a bimodule over
  $\mathsf{U}$ which is finitely generated as a left or right module.
  Of course, every $a\in \mathsf{H}$ lies in one of these submodules.
  This completes the proof, since $\mathsf{U}$ is Noetherian.  
\end{proof}

Note that this presentation allows us to give a ``strange'' polynomial
representation of the Cherednik algebra on the ring $\poly$ of
polynomials over $\bbK$ in the alphabets of variables $\mathbf{U}=\{U_1,\dots, U_n\}$ and
$\mathbf{T}=\{T_1,\dots, T_n\}$ modulo the relations $T_i^\ell=1$.  As before, we
define $U_i,T_i$ for all $i\in \Z$, by the formula
$U_i=U_{i-n}-\hbar, T_i=\zeta T_{i-n}$.  To distinguish between the
polynomial representation we wish to define and the action of $\Gamma$
on polynomials induced by its linear action, we use $f^{\sigma}$ to
denote the image of $f$ under the latter action of $\sigma\in \Gamma$.
The desired representation sends
\begin{align} u_i \cdot f(\mathbf{U};\mathbf{T}) &=U_if(\mathbf{U};\mathbf{T}) \label{eq:u-action} \\
t_i\cdot f(\mathbf{U};\mathbf{T})  & = T_i f(\mathbf{U};\mathbf{T}) \\
  r_i\cdot f (\mathbf{U};\mathbf{T}) &= f^{r_i}+k\ell \frac{
                   f^{(i,i+1)}-f}{U_{i+1}-U_i}\pi_{i,i+1}\\
  \label{eq:sigma-action}\sigma \cdot f(\mathbf{U};\mathbf{T}) & = (
                                                                 u_1-p(\zeta^{-1}t_1)+\hbar
                                                                 )\cdot\\
  & \qquad \quad f(U_2,U_3,\dots, U_n,U_{n+1};T_2,T_3,\dots, T_n,T_{n+1})\\
  \label{eq:tau-action}\tau \cdot f(\mathbf{U};\mathbf{T}) & = f(U_0,U_1,\dots, U_{n-2},U_{n-1};T_0,T_1,\dots, T_{n-2},T_{n-1})
\end{align}
where, as before, $\pi_{i,i+1}$ is the $\K[\mathbf{U}]$-linear map that sends
$T_1^{z_1}\cdots T_n^{z_n}\mapsto \delta_{z_i,z_{i+1}}T_1^{z_1}\cdots T_n^{z_n}.$ This is an extension to the
whole Cherednik algebra of the action by difference operators
introduced by Kodera-Nakajima in \cite[Thm. 1.5]{KoNa}.

This representation is generated by the constant function $1$, subject
to the left ideal of relations generated by 
\[(i,i+1)\cdot 1=\tau\cdot 1=1\qquad \sigma\cdot 1=( u_1-p(\zeta^{-1}t_1)+\hbar )\]
Note that if we transport structure from this representation to the
Cherednik algebra $\mathsf{H}$ then the formulae for the action of
$x_i$ and $y_i$ will be quite complicated.

Note also that the invariants of $\Gamma$ acting on the ring  $\poly$
are simply the $S_n$-invariant functions in the variables $U_i$ (for
the usual action or equivalently, the dAHA action).  Thus, the spherical Cherednik
algebra $e\mathsf{H}e$ acts naturally on these symmetric polynomials.

In the discussion above, we can think of the parameters as
formal variables, in which case, we'll obtain an action on $
\poly^{\Gamma}\otimes \param$, where $\param=\K[  \ps_0,\dots,
\ps_{\ell-1},k]^{S_{\ell}}$ where $\ps_i$ are as defined in \eqref{eq:s-def}.

\section{Weighted KLR algebras}
\label{sec:weight-klr-algebr}

This presentation gives a concrete equivalence between a category of
representations of the Cherednik algebra, and representations of a
weighted KLR algebra, originally proven in \cite{WebRou}.  In this
section, we set $\hbar=-1$ for simplicity\footnote{The reader may
  doubt the simplicity of this choice, but due to some other
  notational choices, it really is for the best.}, and assume that we have
numerical parameters $k,\ps_i\in \K$.
\begin{definition}
  Let $\mathsf{H}\mmod_u$ be the category of $\mathsf{H}$-modules on which the
  polynomial ring $\mathsf{U}$ acts locally finitely, with
  finite dimensional generalized weight spaces.  We call modules in
  this category {\bf Dunkl-Opdam modules.}  In the terminology of
  \cite{FOD}, these are the ``Harish-Chandra'' modules for this subalgebra.
\end{definition}
By Lemma \ref{Euler}, any module where the Euler element $\mathsf{eu}$
acts with finite dimensional generalized weight spaces lies in this
category.  In particular, any module in the GGOR category $\cO$ is a
Dunkl-Opdam module.   

Of course, for each pair $\Ba\in\K^n$ and $\Bz\in \mu_\ell(\K)^n$, we
have an exact generalized weight space functor \[W_{\Ba,\Bz}(M)=\{m\in M\mid
(u_i-a_i)^Nm=(t_i-z_i)^Nm=0 \text{ for } N\gg 0\}.\] 

Consider the additive quotient group $\K/\Z$; for an element
$a\in \K$, we let $\bar{a}$ denote its coset in this quotient. We have a
natural homomorphism $\gamma\colon \mu_\ell\to \K/ \Z$ sending $\zeta^m\mapsto
\frac{m}{\ell}\pmod{\Z}$.  Let $\Sigma\colon \K\times \mu_\ell(\K)\to \K/
\Z$ be the homomorphism $\Sigma(a,z)=\frac{\bar{a}}{\ell}+\gamma(z)$.
Note that this is well-defined since the characteristic of $\K$ is
coprime to $\ell$.

Consider the length 0 element
\begin{align*}
\nu\cdot (\Ba,\Bz)&=((a_0=a_n+1,a_1,\dots,
a_{n-1}),(z_0=\zeta z_{n},\dots,z_{n-1}))\\ \nu^{-1}\cdot (\Ba,\Bz)&=((a_2,a_3,\dots,
a_{n+1}=a_1-1),(z_2,\dots,z_{n+1}=\zeta^{-1} z_1)).
\end{align*}

The relations (\ref{u-sig}--\ref{t-tau}) show that:
\begin{lemma}\label{lem:st-transform}
  The elements $\sigma$ and $\tau$ induce natural transformations:
\[  \sigma\colon W_{(\Ba,\Bz)}\to W_{\nu\cdot (\Ba,\Bz)}\qquad \tau\colon W_{(\Ba,\Bz)}\to W_{\nu^{-1}\cdot (\Ba,\Bz)}.\]
\end{lemma}

\begin{lemma}
Let $v\in W_{\Ba,\Bz}(M)$ be a weight vector that generates $M$. If
for some $\Ba',\Bz'$ we have $W_{\Ba',\Bz'}(M)\neq 0$ then after some permutation
$\rho\in S_n$, we have that $\Sigma (a_i,z_i)=\Sigma(a_{\rho \cdot
  i}',z_{\rho\cdot i}')$.   
\end{lemma}
\begin{proof}
  This is readily confirmed from the relations
 (\ref{u-sig}--\ref{t-tau}).  The (\ref{degHeckeDO}) shows that the
  action of $\Gamma$ can only simultaneously permute $\Ba$ and $\Bz$,
  and Lemma \ref{lem:st-transform} shows that $\Sigma$ and $\tau$ act by
  simultaneous cyclic permutation of $\Sigma (a_i,z_i)$.  
\end{proof}

\begin{corollary}
  If $M$ is an indecomposable $\mathsf{H}$-module, 
  and we have $(\Ba,\Bz)$ and $(\Ba',\Bz')$ such that
  $W_{\Ba,\Bz}(M)\neq 0$ and $W_{\Ba',\Bz'}(M)\neq 0$  then the multisets 
  $\{\Sigma (a_i,z_i)\}$ and   $\{\Sigma (a_i',z_i')\}$ are equal.
\end{corollary}
In particular, we can naturally organize the structure of modules over
$\mathsf{H}$ by fixing which elements of $\K/\Z$ can appear as
$\Sigma(a_i,z_i)$.   Fix a subset $D$ of $\K/\Z$, and let
$\tilde{D}=\Sigma^{-1}(D). $
\begin{definition}\label{def:HmodD}
  Let $\mathsf{H}\mmod_D$ be the subcategory of $\mathsf{H}\mmod_u$
  killed by the functors $W_{\Ba,\Bz}$ where
  $(a_i,z_i)\notin \tilde{D}$ for some $i$.
\end{definition}
 We'll see that the structure of this
category depends in a subtle way on the set $D$; we'll need a fair
amount of combinatorics below to capture this structure.  The most
important aspect of it a quiver structure on $D$ that we'll define below.
We give
$D$ the structure of a quiver by adding an arrow $m\to m+k $ whenever
both lie in $D$.  
Thus if $\K$ is a field of characteristic 0, if $k =a/e\in \Q$, then
$\K/\Z$ is an infinite union of $e$-cycles, whereas if $k\in
\K\setminus \Q$ then $\K/\Z$ is a union of infinite linear quivers.

\subsection{Characteristic $0$}
\label{sec:characteristic-0}

Assume that $\K$ is a field of characteristic 0, and thus contains a
canonically embedded copy of $\Q$.
Accordingly, $\K$ is a $\Q$-vector space,
and using the axiom of choice, we can choose a $\Q$-linear map
$\Upsilon\colon \K\to
\R$ which sends $1\mapsto 1$.  
Note that making this choice, we have a  divergence between two
important cases: if $k \in \Q$, then we must have $\Upsilon(k )=k $; on
the other hand, if $k \notin \Q$, then $\Upsilon(k )$ can be chosen
freely.  For example, in the latter case, we could without loss of
generality assume that $\Upsilon(k )=0$.  Note that while our precise
description of the attached weighted KLR algebra will depend on the
choice of $\Upsilon$, this choice is purely auxilliary, and changing
it will result in two algebras which are isomorphic by \cite[2.15]{WebwKLR}.

In this case, if $k =a/e\in \Q$, then $\K/\Z$ is an infinite union of $e$-cycles, whereas if $k\in \K\setminus \Q$ then $\K/\Z$ is a union of infinite linear quivers.
Let $\cs_i=\overline{\ps_i/\ell}\in \K/\Z$.  

The category $\mathsf{H}\mmod_D$ from Definition \ref{def:HmodD} has a natural description in terms of weighted KLR
algebras.
\begin{definition}
  Let $D_\infty$ be the quiver $D$ with an additional vertex $\infty$
  added, and an arrow $\infty \to \cs_i$ added
  for each $i$ such that $\cs_i\in D$.  This is what
  we often call a {\bf Crawley-Boevey quiver}, after the observation
  by Crawley-Boevey that the points in Nakajima's quiver varieties can
  be seen as representations of the doubling of this quiver, with a
  1-dimensional vector space at $\infty$.
\end{definition}
Choose a real number $\epsilon$ such that $0<\epsilon \ll 1$.  
Consider the weighting of this quiver where each
edge in $D$ is weighted by $\Upsilon(k )$
and the new edge for $\cs_i$ by
$\Upsilon(\frac{p(\zeta^i)}{\ell} )-i\epsilon$. Note that this means that two new edges
connected to the same vertex can never have the same weighting, since
if $\Upsilon(\frac{p(\zeta^i)}{\ell}
)=\Upsilon(\frac{p(\zeta^j)}{\ell} )$, then
$\Upsilon(\cs_i-\cs_j)=\frac{i-j}{\ell}\notin \Z$.  

\begin{example}
  For example, if $\ell=2, k=2/3$, and $s_0=0, s_1=1/3$ then we have that $\K/\Z$
  breaks into 3-cycles 
\[\bar a \to \bar a+\frac{2}3\to \bar
  a+\frac{4}3 \to \bar {a}+2=\bar{a}.\]
If $D=\{\bar{0},\overline{1/3},\overline{2/3}\}$, then the Crawley-Boevey quiver
is given by this 3-cycle with edges from $\bar{0}$ and $\overline{1/3}$ to $\infty$.
On the other hand, if $D$ is disjoint from
$\{\bar{0},\overline{1/3},\overline{2/3}\}$, then the Crawley-Boevey quiver adds
no edges.

On the other hand, if $k=\sqrt{2}$ (assuming this root exists in
$\K$), then $\K/\Z$ will decompose into infinite chains 
$\cdots \to  \bar a -\sqrt{2}\to \bar a \to \bar a+\sqrt{2}\to
\cdots$.  Note that $k$ being an irrational algebraic number has no
bearing on the structure of the category; the only thing which is
significant is its order as an element of the group $\K/\Z$.  Since
$\Upsilon(k)=0$, this graph has trivial weighting.

The extra edges in the Crawley-Boevey quiver still attached to
$\bar{0}$ and $\overline{1/3}$, but these are now on different
components.  
\end{example}

\subsection{Weighted KLR algebras}
\label{sec:weight-klr-algebr-1}

Consider the {\bf reduced weighted KLR algebra} $R_D$ attached to the quiver
$D_\infty$ with its chosen weighting as defined in \cite[\S
4.1]{WebRou} (see also \cite[\S
3.1]{WebwKLR}).  
Choose $\epsilon \in \R$ to be smaller than $|\Upsilon(a_i-a_j)|/n$
for any pair $i$ and $j$ with $\Upsilon(a_i-a_j)\neq 0$.

\begin{definition}
  We let a  {\bf weighted
KLR diagram} be a collection of curves in
  $\R\times [0,1]$ with each curve mapping diffeomorphically to
  $[0,1]$ via the projection to the $y$-axis.  Each curve is allowed
  to carry any number of dots, and has a label that lies in $D$. We draw:
  \begin{itemize}
  \item a dashed line $\Upsilon(k )$ units to the right of each strand,
    which we call a {\bf ghost},
\item red
lines at $x=\Upsilon(\frac{p(\zeta^i)}{\ell} )-i\epsilon$  labeled
with the fundamental weight for $\cs_i\in D$.  
  \end{itemize} 
We now require that there are no
  triple points or tangencies involving any combination of strands,
  ghosts or red lines and no dots lie on crossings.  We consider these diagrams equivalent if they
  are related by an isotopy that avoids these tangencies, double
  points and dots on crossings.
\end{definition} 
The intersection of such a diagram with $y=0$ or $y=1$ gives a {\bf
  loading}, that is, a labeling of a finite subset of $\R$
with vertices of the quiver $D$.   
  For every pair of $n$-tuples $\Ba$ and $\Bz$ with
  $\Sigma(a_i,z_i)\in D$ for all $i$, we can define
  a loading $e(\Ba,\Bz)$ as follows: we label the real number
  $\Upsilon(\frac{a_i}{\ell})+i\epsilon$ with the element $\Sigma(a_i,z_i)\in D$.
\begin{definition}
  Consider the algebra $R_D$ spanned by weighted KLR diagrams whose
  top and bottom both give loadings of the form $e(\Ba,\Bz)$ with
  $\Sigma(a_i,z_i)\in D$ modulo the local relations

\newseq
\begin{equation*}\subeqn\label{dots-1}
    \begin{tikzpicture}[scale=.45,baseline]
      \draw[very thick](-4,0) +(-1,-1) -- +(1,1) node[below,at start]
      {$i$}; \draw[very thick](-4,0) +(1,-1) -- +(-1,1) node[below,at
      start] {$j$}; \fill (-4.5,.5) circle (5pt);
      \node at (-2,0){=}; \draw[very thick](0,0) +(-1,-1) -- +(1,1)
      node[below,at start] {$i$}; \draw[very thick](0,0) +(1,-1) --
      +(-1,1) node[below,at start] {$j$}; \fill (.5,-.5) circle (5pt);
      \node at (4,0){for $i\neq j$};
    \end{tikzpicture}\end{equation*}
\begin{equation*}\label{dots-2}\subeqn
    \begin{tikzpicture}[scale=.45,baseline]
      \draw[very thick](-4,0) +(-1,-1) -- +(1,1) node[below,at start]
      {$i$}; \draw[very thick](-4,0) +(1,-1) -- +(-1,1) node[below,at
      start] {$i$}; \fill (-4.5,.5) circle (5pt);
      \node at (-2,0){=}; \draw[very thick](0,0) +(-1,-1) -- +(1,1)
      node[below,at start] {$i$}; \draw[very thick](0,0) +(1,-1) --
      +(-1,1) node[below,at start] {$i$}; \fill (.5,-.5) circle (5pt);
      \node at (2,0){+}; \draw[very thick](4,0) +(-1,-1) -- +(-1,1)
      node[below,at start] {$i$}; \draw[very thick](4,0) +(0,-1) --
      +(0,1) node[below,at start] {$i$};
    \end{tikzpicture}\qquad 
    \begin{tikzpicture}[scale=.45,baseline]
      \draw[very thick](-4,0) +(-1,-1) -- +(1,1) node[below,at start]
      {$i$}; \draw[very thick](-4,0) +(1,-1) -- +(-1,1) node[below,at
      start] {$i$}; \fill (-4.5,-.5) circle (5pt);
      \node at (-2,0){=}; \draw[very thick](0,0) +(-1,-1) -- +(1,1)
      node[below,at start] {$i$}; \draw[very thick](0,0) +(1,-1) --
      +(-1,1) node[below,at start] {$i$}; \fill (.5,.5) circle (5pt);
      \node at (2,0){+}; \draw[very thick](4,0) +(-1,-1) -- +(-1,1)
      node[below,at start] {$i$}; \draw[very thick](4,0) +(0,-1) --
      +(0,1) node[below,at start] {$i$};
    \end{tikzpicture}
  \end{equation*}
\begin{equation*}\label{strand-bigon}\subeqn
    \begin{tikzpicture}[very thick,scale=.8,baseline]
      \draw (-2.8,0) +(0,-1) .. controls (-1.2,0) ..  +(0,1)
      node[below,at start]{$i$}; \draw (-1.2,0) +(0,-1) .. controls
      (-2.8,0) ..  +(0,1) node[below,at start]{$i$}; \node at (-.5,0)
      {=}; \node at (0.4,0) {$0$};
\node at (1.5,.05) {and};
    \end{tikzpicture}
\hspace{.4cm}
    \begin{tikzpicture}[very thick,scale=.8 ,baseline]

      \draw (-2.8,0) +(0,-1) .. controls (-1.2,0) ..  +(0,1)
      node[below,at start]{$i$}; \draw (-1.2,0) +(0,-1) .. controls
      (-2.8,0) ..  +(0,1) node[below,at start]{$j$}; \node at (-.5,0)
      {=};

\draw (1.8,0) +(0,-1) -- +(0,1) node[below,at start]{$j$};
      \draw (1,0) +(0,-1) -- +(0,1) node[below,at start]{$i$}; 
    \end{tikzpicture}
  \end{equation*} 
\begin{equation*}\label{ghost-bigon1}\subeqn
\begin{tikzpicture}[very thick,xscale=1.6 ,yscale=.8,baseline]
 \draw (1,-1) to[in=-90,out=90]  node[below, at start]{$i$} (1.5,0) to[in=-90,out=90] (1,1)
;
  \draw[dashed] (1.5,-1) to[in=-90,out=90] (1,0) to[in=-90,out=90] (1.5,1);
  \draw (2.5,-1) to[in=-90,out=90]  node[below, at start]{$j$} (2,0) to[in=-90,out=90] (2.5,1);
\node at (3,0) {=};
  \draw (3.7,-1) -- (3.7,1) node[below, at start]{$i$}
 ;
  \draw[dashed] (4.2,-1) to (4.2,1);
  \draw (5.2,-1) -- (5.2,1) node[below, at start]{$j$};  \node at (6.2,0){for $i+k\neq j$};
\end{tikzpicture}
\end{equation*} 
\begin{equation*}\label{ghost-bigon1a}\subeqn
\begin{tikzpicture}[very thick,xscale=1.6 ,yscale=.8,baseline]
 \draw (1.5,-1) to[in=-90,out=90]  node[below, at start]{$i$} (1,0) to[in=-90,out=90] (1.5,1)
;
  \draw[dashed] (1,-1) to[in=-90,out=90] (1.5,0) to[in=-90,out=90] (1,1);
  \draw (2,-1) to[in=-90,out=90]  node[below, at start]{$j$} (2.5,0) to[in=-90,out=90] (2,1);
\node at (3,0) {=};
  \draw (4.2,-1) -- (4.2,1) node[below, at start]{$i$}
 ;
  \draw[dashed] (3.7,-1) to (3.7,1);
  \draw (4.7,-1) -- (4.7,1) node[below, at start]{$j$};  \node at (6.2,0){for $i+k\neq j$};
\end{tikzpicture}
\end{equation*} \begin{equation*}\label{ghost-bigon2}\subeqn
\begin{tikzpicture}[very thick,xscale=1.4,baseline=25pt]
 \draw (1,0) to[in=-90,out=90]  node[below, at start]{$i$} (1.5,1) to[in=-90,out=90] (1,2)
;
  \draw[dashed] (1.5,0) to[in=-90,out=90] (1,1) to[in=-90,out=90] (1.5,2);
  \draw (2.5,0) to[in=-90,out=90]  node[below, at start]{$i+k$} (2,1) to[in=-90,out=90] (2.5,2);
\node at (3,1) {=};
  \draw (3.7,0) -- (3.7,2) node[below, at start]{$i$}
 ;
  \draw[dashed] (4.2,0) to (4.2,2);
  \draw (5.2,0) -- (5.2,2) node[below, at start]{$i+k$} node[midway,fill,inner sep=2.5pt,circle]{};
\node at (5.75,1) {$-$};

  \draw (6.2,0) -- (6.2,2) node[below, at start]{$i$} node[midway,fill,inner sep=2.5pt,circle]{};
  \draw[dashed] (6.7,0)-- (6.7,2);
  \draw (7.7,0) -- (7.7,2) node[below, at start]{$i+k$};
\node at (8.15,1) {$+$}; \node at (8.45,1){$h$};
  \draw (8.7,0) -- (8.7,2) node[below, at start]{$i$};
  \draw[dashed] (9.2,0)-- (9.2,2);
  \draw (10.2,0) -- (10.2,2) node[below, at start]{$i+k$};
\end{tikzpicture}
\end{equation*}
 \begin{equation*}\label{ghost-bigon2a}\subeqn
\begin{tikzpicture}[very thick,xscale=1.4,baseline=25pt]
 \draw (1.5,0) to[in=-90,out=90]  node[below, at start]{$i$} (1,1) to[in=-90,out=90] (1.5,2)
;
  \draw[dashed] (1,0) to[in=-90,out=90] (1.5,1) to[in=-90,out=90] (1,2);
  \draw (2,0) to[in=-90,out=90]  node[below, at start]{$i+k$} (2.5,1) to[in=-90,out=90] (2,2);
\node at (3,1) {=};
  \draw (4.2,0) -- (4.2,2) node[below, at start]{$i$}
 ;
  \draw[dashed] (3.7,0) to (3.7,2);
  \draw (4.7,0) -- (4.7,2) node[below, at start]{$i+k$} node[midway,fill,inner sep=2.5pt,circle]{};
\node at (5.25,1) {$-$};

  \draw (6.2,0) -- (6.2,2) node[below, at start]{$i$} node[midway,fill,inner sep=2.5pt,circle]{};
  \draw[dashed] (5.7,0)-- (5.7,2);
  \draw (6.7,0) -- (6.7,2) node[below, at start]{$i+k$};
\node at (7.15,1) {$+$}; \node at (7.45,1){$h$};
  \draw (8.2,0) -- (8.2,2) node[below, at start]{$i$};
  \draw[dashed] (7.7,0)-- (7.7,2);
  \draw (8.7,0) -- (8.7,2) node[below, at start]{$i+k$};
\end{tikzpicture}
\end{equation*}
 \begin{equation*}\subeqn\label{triple-boring}
    \begin{tikzpicture}[very thick,scale=1 ,baseline]
      \draw (-3,0) +(1,-1) -- +(-1,1) node[below,at start]{$m$}; \draw
      (-3,0) +(-1,-1) -- +(1,1) node[below,at start]{$i$}; \draw
      (-3,0) +(0,-1) .. controls (-4,0) ..  +(0,1) node[below,at
      start]{$j$}; \node at (-1,0) {=}; \draw (1,0) +(1,-1) -- +(-1,1)
      node[below,at start]{$m$}; \draw (1,0) +(-1,-1) -- +(1,1)
      node[below,at start]{$i$}; \draw (1,0) +(0,-1) .. controls
      (2,0) ..  +(0,1) node[below,at start]{$j$};
    \end{tikzpicture}
  \end{equation*}
\begin{equation*}\subeqn \label{eq:triple-point1}
    \begin{tikzpicture}[very thick,xscale=1.6,yscale=.8,baseline]
      \draw[dashed] (-3,0) +(.4,-1) -- +(-.4,1);
 \draw[dashed]      (-3,0) +(-.4,-1) -- +(.4,1); 
    \draw (-1.5,0) +(.4,-1) -- +(-.4,1) node[below,at start]{$i+k$}; \draw
      (-1.5,0) +(-.4,-1) -- +(.4,1) node[below,at start]{$i+k$}; 
 \draw (-3,0) +(0,-1) .. controls (-3.5,0) ..  +(0,1) node[below,at
      start]{$i$};\node at (-.75,0) {=};  \draw[dashed] (0,0) +(.4,-1) -- +(-.4,1);
 \draw[dashed]      (0,0) +(-.4,-1) -- +(.4,1); 
    \draw (1.5,0) +(.4,-1) -- +(-.4,1) node[below,at start]{$i+k$}; \draw
      (1.5,0) +(-.4,-1) -- +(.4,1) node[below,at start]{$i+k$}; 
 \draw (0,0) +(0,-1) .. controls (.5,0) ..  +(0,1) node[below,at
      start]{$i$};
\node at (2.25,0)
      {$-$};   
     \draw (4.5,0)
      +(.4,-1) -- +(.4,1) node[below,at start]{$i+k$}; \draw (4.5,0)
      +(-.4,-1) -- +(-.4,1) node[below,at start]{$i+k$}; 
 \draw[dashed] (3,0)
      +(.4,-1) -- +(.4,1); \draw[dashed] (3,0)
      +(-.4,-1) -- +(-.4,1); 
\draw (3,0)
      +(0,-1) -- +(0,1) node[below,at start]{$i$};
    \end{tikzpicture}
  \end{equation*}
\begin{equation*}\subeqn\label{eq:triple-point2}
    \begin{tikzpicture}[very thick,xscale=1.6,yscale=.8,baseline]
\draw[dashed] (-3,0) +(0,-1) .. controls (-3.5,0) ..  +(0,1) ;  
  \draw (-3,0) +(.4,-1) -- +(-.4,1) node[below,at start]{$i$}; \draw
      (-3,0) +(-.4,-1) -- +(.4,1) node[below,at start]{$i$}; 
 \draw (-1.5,0) +(0,-1) .. controls (-2,0) ..  +(0,1) node[below,at
      start]{$i+k$};\node at (-.75,0) {=};  
    \draw (0,0) +(.4,-1) -- +(-.4,1) node[below,at start]{$i$}; \draw
      (0,0) +(-.4,-1) -- +(.4,1) node[below,at start]{$i$}; 
 \draw[dashed] (0,0) +(0,-1) .. controls (.5,0) ..  +(0,1);
 \draw (1.5,0) +(0,-1) .. controls (2,0) ..  +(0,1) node[below,at
      start]{$i+k$};
\node at (2.25,0)
      {$+$};   
     \draw (3,0)
      +(.4,-1) -- +(.4,1) node[below,at start]{$i$}; \draw (3,0)
      +(-.4,-1) -- +(-.4,1) node[below,at start]{$i$}; 
\draw[dashed] (3,0)
      +(0,-1) -- +(0,1);\draw (4.5,0)
      +(0,-1) -- +(0,1) node[below,at start]{$i+k$};
    \end{tikzpicture}.
  \end{equation*}
  \begin{equation*}\subeqn\label{cost}
  \begin{tikzpicture}[very thick,baseline,scale=.8,xscale=.7]
    \draw (-2.8,0)  +(0,-1) .. controls (-1.2,0) ..  +(0,1) node[below,at start]{$i$};
       \draw[wei] (-1.2,0)  +(0,-1) .. controls (-2.8,0) ..  +(0,1) node[below,at start]{$i$};
           \node at (-.3,0) {=};
    \draw[wei] (2.8,0)  +(0,-1) -- +(0,1) node[below,at start]{$i$};
       \draw (1.2,0)  +(0,-1) -- +(0,1) node[below,at start]{$i$};
       \fill (1.2,0) circle (3pt);
 \node at (3.8,0) {$-$};\node at (4.5,0){$z_k$};
        \draw[wei] (6.8,0)  +(0,-1) -- +(0,1) node[below,at start]{$i$};
       \draw (5.2,0)  +(0,-1) -- +(0,1) node[below,at start]{$i$};
  \end{tikzpicture}\qquad\qquad
  \begin{tikzpicture}[very thick,baseline,scale=.8,xscale=.7]
          \draw[wei] (6.8,0)  +(0,-1) .. controls (5.2,0) ..  +(0,1) node[below,at start]{$j$};
  \draw (5.2,0)  +(0,-1) .. controls (6.8,0) ..  +(0,1) node[below,at start]{$i$};
           \node at (7.7,0) {=};
    \draw (9.2,0)  +(0,-1) -- +(0,1) node[below,at start]{$i$};
       \draw[wei] (10.8,0)  +(0,-1) -- +(0,1) node[below,at start]{$j$};
  \end{tikzpicture}
\end{equation*} 
  \begin{equation*}\subeqn
    \begin{tikzpicture}[very thick,scale=.8,baseline]
      \draw (-3,0)  +(1,-1) -- +(-1,1) node[at start,below]{$i$};
      \draw (-3,0) +(-1,-1) -- +(1,1)node [at start,below]{$j$};
      \draw[wei] (-3,0)  +(0,-1) .. controls (-4,0) ..  node[at start,below]{$m$}+(0,1);
      \node at (-1,0) {=};
      \draw (1,0)  +(1,-1) -- +(-1,1) node[at start,below]{$i$};
      \draw (1,0) +(-1,-1) -- +(1,1) node [at start,below]{$j$};
      \draw[wei] (1,0) +(0,-1) .. controls (2,0) .. node[at start,below]{$m$} +(0,1);   
\node at (2.8,0) {$+ $};
      \draw (6.5,0)  +(1,-1) -- +(1,1)  node[at start,below]{$i$};
      \draw (6.5,0) +(-1,-1) -- +(-1,1) node [at start,below]{$j$};
      \draw[wei] (6.5,0) +(0,-1) -- node[at start,below]{$m$} +(0,1);
\node at (3.8,-.2){$\delta_{i,j,m} $}  ;
 \end{tikzpicture}
  \end{equation*}
\begin{equation*}\subeqn\label{dumb}
    \begin{tikzpicture}[very thick,scale=.8,baseline]
      \draw[wei] (-3,0)  +(1,-1) -- +(-1,1);
      \draw (-3,0)  +(0,-1) .. controls (-4,0) ..  +(0,1);
      \draw (-3,0) +(-1,-1) -- +(1,1);
      \node at (-1,0) {=};
      \draw[wei] (1,0)  +(1,-1) -- +(-1,1);
  \draw (1,0)  +(0,-1) .. controls (2,0) ..  +(0,1);
      \draw (1,0) +(-1,-1) -- +(1,1);    \end{tikzpicture}\qquad \qquad
    \begin{tikzpicture}[very thick,scale=.8,baseline]
  \draw(-3,0) +(-1,-1) -- +(1,1);
  \draw[wei](-3,0) +(1,-1) -- +(-1,1);
\fill (-3.5,-.5) circle (3pt);
\node at (-1,0) {=};
 \draw(1,0) +(-1,-1) -- +(1,1);
  \draw[wei](1,0) +(1,-1) -- +(-1,1);
\fill (1.5,.5) circle (3pt);
    \end{tikzpicture}
  \end{equation*}
For the relations (\ref{dumb}), we also include their mirror images.

This algebra is graded with \[
\deg\tikz[baseline,very thick,scale=1.5]{\draw (.2,.3) --
  (-.2,-.1) node[at end,below, scale=.8]{$i$}; \draw
  (.2,-.1) -- (-.2,.3) node[at start,below,scale=.8]{$j$};} =-2\delta_{i,j} \qquad  \deg\tikz[baseline,very thick,scale=1.5]{\draw[densely dashed] 
  (-.2,-.1)-- (.2,.3) node[at start,below, scale=.8]{$i$}; \draw
  (.2,-.1) -- (-.2,.3) node[at start,below,scale=.8]{$j$};} =\delta_{j,i-k} \qquad \deg\tikz[baseline,very thick,scale=1.5]{\draw (.2,.3) --
  (-.2,-.1) node[at end,below, scale=.8]{$i$}; \draw [densely dashed]
  (.2,-.1) -- (-.2,.3) node[at start,below,scale=.8]{$j$};}
=\delta_{j,i+k}\]
\[
\deg\tikz[baseline,very thick,scale=1.5]{\draw
  (0,.3) -- (0,-.1) node[at end,below,scale=.8]{$i$}
  node[midway,circle,fill=black,inner
  sep=2pt]{};}=2 \qquad \deg\tikz[baseline,very thick,scale=1.5]{\draw[wei] 
  (-.2,-.1)-- (.2,.3) node[at start,below, scale=.8]{$i$}; \draw
  (.2,-.1) -- (-.2,.3) node[at start,below,scale=.8]{$j$};} =\delta_{j,i} \qquad \deg\tikz[baseline,very thick,scale=1.5]{\draw (.2,.3) --
  (-.2,-.1) node[at end,below, scale=.8]{$i$}; \draw [wei]
  (.2,-.1) -- (-.2,.3) node[at start,below,scale=.8]{$j$};}
=\delta_{j,i}\] and we'll also consider the completion $\widehat{R}_D$ of this
  algebra with respect to its grading.  
\end{definition}

We let $e(\Ba,\Bz)$ denote the
  idempotent in $R_D$ or the completion $\widehat{R}_D$ given by a
  diagram of vertical lines whose $x$-values are determined by the
  corresponding loading.

  It will often be technically more convenient for us to think of
  $R_D$ or $\widehat{R}_D$ as a category whose objects are loadings
  and whose morphisms are elements of $R_D$ matching the source
  loading at the bottom and target loading at the top; this is the
  standard trick for considering a ring with set of idempotents
  summing to the identity as a category, discussed in \cite[\S 3.1]{Lauintro}.

  \begin{remark}\label{rem:equiv}
    As we've defined it, the algebra $R_D$ is infinite rank as a
    module over $\bbK[y_1,\dots, y_n]$, since we consider the
    $x$-values of the strands at the top and bottom of the diagram as
    fixed.  However, if two loadings are related by an isotopy
    (i.e. the straight line diagram relating them has no crossings),
    they are equivalent objects in the category $R_D$.  This is {\bf
      equivalence} of loadings, as discussed in
    \cite[Def. 2.9]{WebwKLR}.  As in \cite[Def. 2.13]{WebwKLR}, we
    usually take ``weighted KLR algebra'' to mean the algebra Morita
    equivalent to $R_D$ where we keep only one loading from each
    equivalence class.  
  \end{remark}
  
\subsection{The isomorphism}
\label{sec:isomorphism}

We'll now compare this KLR algebra with the category of Dunkl-Opdam
modules using the approach of Drozd-Futorny-Ovsienko \cite{FOD}.  They
introduce a category $\mathcal{H}$ whose objects are pairs
$(\Ba,\Bz)\in \K^n\times \mu_\ell(\K)^n$, considered as maximal ideals
$\mathfrak{m}_{(\Ba,\Bz)}\subset \mathsf{U}$.  The morphisms in this
category are given by:
\[\Hom_{\mathcal{H}} ((\Ba,\Bz), (\Ba',\Bz'))=\varinjlim
  \mathsf{H}/(\mathfrak{m}_{(\Ba',\Bz')}^N  \mathsf{H}+
  \mathsf{H}\mathfrak{m}_{(\Ba,\Bz)}^N)\]
with the obvious composition by multiplication.  As an inverse limit,
this Hom-space has a natural induced topology.  

\begin{theorem}[\mbox{\cite[Th. 17]{FOD}}]\label{th:FOD}
The category of Dunkl-Opdam modules is equivalent to the
representations of the category $\mathcal{H}$ which are continuous in
the discrete topology, via a functor sending the module $M$ to the
representation $(\Ba,\Bz)\mapsto W_{\Ba,\Bz}(M)$.   
\end{theorem}

Note that this category has a ``polynomial representation'' induced by
the representation of $\mathsf{H}$ on $\mathcal{U}$; this sends
\[(\Ba,\Bz)\mapsto \varinjlim \mathcal{U}/\mathfrak{m}_{(\Ba,\Bz)}^N\cong
\bbK[[U_1-a_1,\dots, U_n-a_n]].\]
This module does not have the discrete topology, and thus does not have a corresponding
Dunkl-Opdam module.  Since the action of $\mathsf{H}$ on $\mathcal{U}$
is faithful, the same is true of the action of $\mathcal{H}$ on the
completions.

Note that the extended affine Weyl group $S_n\ltimes \Z^n$ acts on $\tilde{D}^n$ by
permutations and translations sending \[(\Ba,\Bz)\mapsto
((a_1+m_1,\dots, a_n+m_n),(\zeta^{m_1}z_1,\dots, \zeta^{m_n}z_n)).\]
Two pairs lie in the same orbit if and only if their images in $D$
agree up to permutation of the entries.     For purposes of
understanding this action, it's useful to extend $\Ba$ and $\Bz$ to
arbitrary integers via $a_i=a_{i-n}-1$, and $z_i=z_{i-n}\zeta^{-1}$.

 For two pairs
  $(\Ba,\Bz)$ and $(\Ba',\Bz')=w\cdot (\Ba,\Bz)$ with $w$ in the
  extended affine Weyl group, we let $\xi(\Ba,\Bz,w)$ be
  the straight-line diagram connecting these loadings.     
  
  It's worth noting how these diagrams look for various values of $\Ba,\Bz$ and $w$.  If $w=r_m$, then this straight line diagram $\xi(\Ba,\Bz,r_m)$ moves the strand corresponding to $(a_m,z_m)$ to the right by $\epsilon$ and that for $(a_{m+1},z_{m+1})$ to the left.  This will result in a diagram which is the same up to isotopy, unless:
  \begin{enumerate}
  	\item If $\Upsilon(a_m)= \Upsilon(a_{m+1})$, then the resulting strands will cross.
  	\item If $\Upsilon(a_m-k\ell)= \Upsilon(a_{m+1}) $ then the
        the $m$th strand crosses the ghost of the $m+1$st strand
        moving rightward.
 	\item If $\Upsilon(a_m+k\ell)= \Upsilon(a_{m+1}) $  then   the $m+1$st strand crosses the ghost of the $m$th strand
        moving leftward.
  \end{enumerate}

  The diagram $\xi(\Ba,\Bz,\nu)$ moves each strand $\epsilon$ steps to
  the left, except that corresponding to $a_n$, which moves
  $1-(n-1)\epsilon$ steps to the right; not that this ensures that
  this strand does not cross any strands with the same label, nor the
  ghost of any with adjacent labels.  
Similarly,
  $\xi(\Ba,\Bz,\nu^{-1})$ pushes all strands $\epsilon$ units to the
  right, except that for $a_1$, which moves $1-(n-1)\epsilon$ units to
  the left. Unlike diagrams coming from elements of $S_n$, these can
  create red and black crossings.  

  Let
  \begin{equation}\label{eq:theta}
 \theta_m=(u_m-u_{m+1})r_m-k\ell \pi_{m,m+1}.
  \end{equation}

  \begin{lemma}\label{lem:W-iso}
 There is a fully faithful functor 
  \begin{equation}
\Xi\colon   \widehat{R}_D\to \mathcal{H}.\label{eq:limit-iso}
\end{equation}
such that  $\Xi$ sends the loading $e(\Ba,\Bz)$ to the object
$({\Ba,\Bz})$.  On morphisms, the dot $y_m e(\Ba,\Bz) $ on the strand corresponding to
$(a_m,z_m)$ is sent to
\begin{equation}
\Xi (y_m e(\Ba,\Bz)  )=(u_m-a_m)
e(\Ba,\Bz),\label{eq:dot-image}
\end{equation}
and we have that  
$\Xi(\xi(\Ba,\Bz,r_m))=
e(\Ba,\Bz)  r_m$ if $ z_m\neq z_{m+1}$ and if $z_m= z_{m+1}$, then 
\begin{equation}
\Xi(\xi(\Ba,\Bz,r_m))=
\begin{cases}
{e} (\Ba,\Bz) \frac{1}{u_m-u_{m+1}-k\ell}\theta_m &
a_{m}- k\ell\neq
 a_{m+1}\neq a_m\\
 e(\Ba,\Bz) \theta_m&
a_{m}-k\ell=
 a_{m+1}\neq a_m\\
e(\Ba,\Bz) \frac{1}{u_{m+1}-u_m+k\ell}(r_m-1) &
a_{m}- k\ell\neq
 a_{m+1}=a_m\\
 e(\Ba,\Bz) (1-r_m)&
a_{m}-k\ell=
 a_{m+1}=a_m.
\end{cases}\label{eq:xi-image}
\end{equation}
 Furthermore,
 \begin{equation}
\Xi(\xi(\Ba,\Bz,\nu))=
 \begin{cases}
   \sigma & a_n=p(z_n)\\
   \sigma\frac{1}{u_n-p(z_n)} & a_n\neq p(z_n)
 \end{cases}\qquad \qquad 
\Xi(\xi(\Ba,\Bz,\nu^{-1}))=\tau.\label{eq:sigma-tau-xi}
\end{equation}

  \end{lemma}
In the formulas above, we have used that if $f$ is a $n$-variable polynomial such that
$f( a_1,\dots,  a_n)\neq 0$, then $f(u_1,\dots, u_n)
e(\Ba,\Bz)$ can be inverted, using the geometric series. \begin{proof}
  First, note that the space
  $e(\Ba',\Bz') \cdot {R}_D\cdot e(\Ba,\Bz)$ has a basis over 
  polynomials in the dots which is in bijection with the elements of
  the extended affine Weyl group sending $(\Ba,\Bz)$ to
  $(\Ba',\Bz') $.  Writing a reduced expression of this element, times
  a power of the length 0 rotation shows how to write this basis
  vector (modulo those corresponding to shorter elements of the Weyl
  group) as a product of straight-line diagrams. More precisely,
  we see that $\widehat{R}_D$ is generated over the dots by the diagrams
  $\xi(\Ba,\Bz,r_m)$ and $ \xi(\Ba,\Bz,\nu^{\pm})$.

  We will thus define $\Xi$ by describing the images of these
  elements.  The algebra $R_D$ has a polynomial representation of the weighted KLR algebra
  introduced in \cite[Prop. 2.7]{WebwKLR}; in the categorical
  framework, we can think of this as a functor that sends each loading
  to the polynomial ring $\bbK[Y_1,\dots, Y_n]$.  After completion, we
  obtain an action of $\widehat{R}_D$ that sends each loading to
  $\bbK[[Y_1,\dots, Y_n]]$.  We'll compare polynomial representations
  by using the isomorphism of this ring to $\bbK[[U_1-a_1,\dots,
  U_n-a_n]]$ which sends $Y_i\mapsto U_i-a_i$.  

  Note first that this is compatible with \eqref{eq:dot-image}.
  
In order to calculate the images of $\xi(\Ba,\Bz,r_m)$ and $ \xi(\Ba,\Bz,\nu^{\pm})$, note that the action of $\theta_m$ and $r_m-1$ in the polynomial representation can be
described as:
\begin{align*}
  \theta_m\cdot f&=(u_m-u_{m+1}-k\ell \pi_{m,m+1})  f^{r_m}\\
(r_m-1) \cdot f &= \frac{u_{m}-u_{m+1}-k\ell\pi_{m,m+1}}{u_{m}-u_{m+1}}(f^{r_m}-f)
\end{align*}
The formulas
of \eqref{eq:xi-image} show that:
\[\Xi(\xi(\Ba,\Bz,r_m))\cdot f{e}(\Ba',\Bz') =
\begin{cases}
f^{r_m}{e}(\Ba,\Bz) &
a_{m}-k\ell\neq
 a_{m+1}\neq a_m\\
 (u_m-u_{m+1}-k\ell)  f^{r_m}{e}(\Ba,\Bz)&
a_{m}-k\ell=
 a_{m+1}\neq a_m\\
\frac{f^{r_m}-f} {u_{m+1}-u_m}{e}(\Ba,\Bz)&
a_{m}- k\ell\neq
 a_{m+1}=a_m\\
( {f-f^{r_m}}) {e}(\Ba,\Bz)&
a_{m}-k\ell=
 a_{m+1}=a_m.
\end{cases}
\] 
The four cases in \eqref{eq:xi-image} correspond to:
\begin{enumerate}
\item There are only crossings in the diagram $\xi$ that act
  trivially on the polynomial representation.
\item There is a ghost crossing in $\xi$ corresponding to an arrow
  $\Sigma(a_{m+1},z_{m+1})\to \Sigma(a_{m},z_{m})$ in $D$ where the strand moves left to right.
\item There is a crossing of strands with the same label
  $\Sigma(a_{m},z_{m})=\Sigma(a_{m+1},z_{m+1})$, but no ghost crossing.
\item There is both a strand and a ghost crossing, corresponding to a
  loop at $\Sigma(a_{m},z_{m})=\Sigma(a_{m+1},z_{m+1})$.  
\end{enumerate}
Thus, these match the formulae of \cite[Prop. 2.7]{WebwKLR}. 

In the case of $ \xi(\Ba,\Bz,\nu^{\pm})$, this same correspondence is
easily confirmed. The straight line diagram $ \xi(\Ba,\Bz,\nu)$:
\begin{itemize}
\item only has a ghost
  crossing with an adjacent label if
  $\Upsilon(a_n)> \Upsilon(a_m- k\ell) > \Upsilon(a_n)+1$ for some
  $m$, which is impossible if $a_n$ and $a_m$ lie in the same
  component of $D$ (since then they would differ by a multiple of
  $k\ell$), and
\item only has a red/black crossing if
  $\Upsilon(a_n)\leq \Upsilon(p(z_n)) <\Upsilon(a_n)+1$, but this
  red/black crossing only has an interesting action if $a_n=p(z_n)$.
  Note that in this case, if $z_n=\zeta^m$, we have that the label on
  the corresponding strand is $\Sigma(a_n,z_n)=\cs_m$,
  so this gives the node labeling the
  corresponding red line.
\end{itemize}
Thus, by the formulae of \cite[Prop. 2.7]{WebwKLR}, we have that $ \xi(\Ba,\Bz,\nu^{\pm})$ acts by the identity
unless $p(z_n)=a_n$, in which case it acts by the identity times a dot
on the strand corresponding to $(a_n,z_n)$.    This matches the
action of the elements on the RHS of \eqref{eq:sigma-tau-xi} under the
action \eqref{eq:sigma-action}.

A similar analysis shows
that under the representation of  \cite[Prop. 2.7]{WebwKLR}, the
diagram  $\xi(\Ba,\Bz,\nu^{-1})$ always acts by the identity.  This
matches with \eqref{eq:tau-action}, completing the proof.

This shows that we have a functor 
$R_D\to \mathcal{H}$, which we wish to show
is fully faithful after completion.   First note that  $\Xi$ intertwines the
grading topology with that on $\mathcal{H}$ on the subalgebras
$\bbK[y_1,\dots, y_n]$.  Since $e(\Ba',\Bz') \cdot
{R}_D\cdot e(\Ba,\Bz)$ is finitely generated as a right module over
$\bbK[y_1,\dots, y_n]$, the grading topology on this space is the same
as that induced by any finite set of generators over $\bbK[y_1,\dots,
y_n]$; similarly, each Hom space in $\mathcal{H}$ is finitely
generated as a right module over the suitable completion of
$\mathsf{U}$, and thus has a similar description of its topology.  This
shows that $\Xi$ induces a continuous functor ${R}_D\to \mathcal{H}$,
which thus extends to the completion.

This functor must be injective on $R_D$, since the polynomial representation
remains faithful after completion by \cite[Lem. 2.5]{WebBKnote}.  On
the other hand, we can easily show that generating morphisms of the
category 
$\mathcal{H}$ lie in the image by inverting the formulas (\ref{eq:dot-image}--\ref{eq:sigma-tau-xi}).
\end{proof} 
Let $\widehat{R}_D\mmod_{\operatorname{fd}}$ be the category of
modules over the algebra $\widehat{R}_D$ such that $e(\Ba,\Bz)M$ is
finite dimensional for all $(\Ba,\Bz)$; if, as in Remark \ref{rem:equiv}, we replace $\widehat{R}_D$
by the Morita equivalent algebra where we take one loading from each
equivalence class, these are genuinely finite-dimensional modules.  Note that these
are precisely the finite-dimensional representations of $R_D$ on which the dots act
nilpotently.  Combining Theorem  \ref{th:FOD} and Lemma
\ref{lem:W-iso}, we find that:
\begin{theorem}\label{th:W-iso}
The functor $\mathsf{W}\colon \mathsf{H}\mmod_D\to \widehat{R}_D\mmod_{\operatorname{fd}}$ sending $M\mapsto
\oplus_{\Ba,\Bz} W_{\Ba,\Bz}(M)$ is an equivalence.
\end{theorem}

\subsection{Category $\cO$}
\label{sec:category-co}

For a fixed choice of parameters $k,\ps_i\in \K$, we let 
\[D=\{ {\cs_i+mk}\mid m\in [-n,n]\}\subset \K/\Z.\]  This is a union of
finite linear quivers if $k\notin \Q$ or $n$ is small; it is a union
of $e$-cycles if $k=a/e$ in reduced form, and $n>e/2$.    Taking the
limit as $n\to\infty$, we just obtain the set $\{
{\cs_i+mk}\mid m\in \Z\}$, which is a union of infinity linear quivers
($A_\infty$) or of $e$-cycles.

The category $\mathsf{H}\mmod_D$ has a natural subcategory
$\mathcal{O}^+$ consisting of finitely generated modules on which $x_i$ acts nilpotently,
considered by \cite{GGOR}; we can equally well consider $\cO^-$, where
$y_i$ acts nilpotently, which is the Ringel dual of $ \mathcal{O}^+$
by \cite[4.11]{GGOR}.
In \cite[Th. A]{WebRou}, this category is related to a quotient of the
weighted KLR algebra: the steadied quotient.  We'll only be
interested in a special case of this notion (which in general depends
on a choice of stability condition).
\begin{definition}
  We'll say that a loading is {\bf unsteady} (for the positive
  stability condition) if there exists a real number
  $\delta\geq \Upsilon(\frac{p(\zeta^i)}{\ell})$ such that a non-empty
  set of points in the loading have $x$-value $>\delta+|\Upsilon(k)|$,
  and all others have $x$-value $\leq \delta$.  

There is also a
  negative stability condition where all signs above are reversed: we
  have $\delta\leq \Upsilon(\frac{p(\zeta^i)}{\ell})$, a non-empty set
  of points have $x$-value $<\delta-|\Upsilon(k)|$, and all others
  have $x$-value $\geq \delta$.
\end{definition}
The quotient of $R_D$ by the two-sided ideal generated by the
idempotents $e(\Ba,\Bz)$ which correspond to unsteady
loadings (for one stability condition) is called the  {\bf
  steadied
quotient}; we denote these by $R_D(\pm)$ for the positive/negative
stability condition.

Note that these algebras have a number of desirable properties: they
are cellular and highest weight (since new edges connected to the same
vertex in $D$ always have different weightings) by
\cite[Th. B]{WebRou}. 
\begin{theorem}\label{th:O-iso}
  The functor $\mathsf{W}$ induces an equivalence $\mathcal{O}^{\pm}\cong R_D(\pm)\mmod$.
\end{theorem}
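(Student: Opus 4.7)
The plan is to build on Theorem \ref{th:W-iso}, which already gives a fully faithful embedding $\mathsf{W}\colon \mathsf{H}\mmod_D\hookrightarrow\widehat{R}_D\mmod$ with essential image the modules that are finite-dimensional on each idempotent. Since $\mathcal{O}^{\pm}$ is a full subcategory of $\mathsf{H}\mmod_D$, the restricted $\mathsf{W}$ is automatically fully faithful, so the task reduces to identifying its essential image with $R_D(\pm)\mmod$. I will treat the $+$ case explicitly; the $-$ case is symmetric, obtained by swapping the roles of $x_i$ and $y_i$ (or alternatively by Ringel duality, as noted before the theorem).

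First I would show that $\mathsf{W}(M)$ is killed by every unsteady idempotent when $M\in\mathcal{O}^+$. The key input is Lemma \ref{lem:W-iso}, which identifies the $\nu$-shift diagrams $\xi(\Ba,\Bz,\nu)$ with $\sigma$ (rescaled by a factor that is invertible whenever $a_1\neq -p(z_1)$), so that after conjugation by cyclic permutations these shifts realize the operators $x_i$ on weight spaces. Unsteadiness in the $+$-sense means that a nonempty subset of strands has $x$-values beyond every red line and is separated from the remaining strands by more than $|\Upsilon(k)|$. For such a strand, successive rightward $\nu$-shifts cross no red line (all of them have already been passed) and produce no non-invertible ghost crossings (by the gap condition), so Lemma \ref{lem:W-iso} expresses the corresponding iterate of $x_i$ as an invertible operator between the weight spaces. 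Local nilpotency of $x_i$ on $M\in\mathcal{O}^+$ then forces $W_{\Ba,\Bz}(M)=0$.

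Conversely, for any $N\in R_D(+)\mmod$ I would produce a preimage using the adjoint $\mathsf{h}(N)=(\bigoplus_{\Ba,\Bz}P_{\Ba,\Bz})\otimes_{\widehat{R}_D}N$ from the proof of Theorem \ref{th:W-iso}. The identity $W_{\Ba,\Bz}(\mathsf{h}(N))\cong e(\Ba,\Bz)N$ places $\mathsf{h}(N)$ in $\mathsf{H}\mmod_D$, and it remains to check membership in $\mathcal{O}^+$. For local nilpotency of $x_i$, any vector $v\in e(\Ba,\Bz)N$ is moved by $x_i$ to a weight space whose loading has the $i$th strand shifted; after enough iterations the loading becomes unsteady and the idempotent annihilates $N$, so $x_i^Nv=0$. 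Finite generation follows because the support of $N$ on steady loadings is finite once we quotient by the nilpotent tail, so $\mathsf{h}(N)$ is generated by finitely many weight vectors, using the Euler grading of Lemma \ref{Euler} to control the iterates.

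The main obstacle I anticipate is nailing down the combinatorial correspondence between ``the shift-orbit of $(\Ba,\Bz)$ under $x_i$ eventually enters the unsteady region'' and genuine $x_i$-nilpotency, and doing so uniformly across the cases $k\in\mathbb{Q}$ (cyclic quiver, where unsteadiness is a genuine restriction because orbits wrap around the cycle) and $k\notin\mathbb{Q}$ (linear quiver, where shift-orbits eventually exit $\tilde{D}$ automatically). Once this matching is in place, the result aligns with \cite[Th. A]{WebRou}, the advantage of the present approach being that it is direct, using the explicit diagrammatic calculations of Lemma \ref{lem:W-iso}, rather than passing through uniqueness of highest-weight covers.
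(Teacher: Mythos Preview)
Your outline is correct and close in spirit to the paper's argument, but the paper sidesteps precisely the obstacle you flag by organising everything through the Euler element rather than through $x_i$-nilpotency directly. For the forward direction (treating $\cO^-$), given an unsteady loading $(\Ba,\Bz)$ with unsteady block $I$, the paper first permutes so that $I=[1,q]$, and then observes that the straight-line diagram to $(\Ba_g,\Bz)$, where $a_i$ is shifted by $-g\ell$ for $i\in I$, is \emph{crossingless}: the shift by a multiple of $\ell$ keeps each $z_i$ (hence each label) fixed, the whole block moves rigidly so no internal or ghost crossings occur, and the gap condition together with the position relative to the red lines rules out all other crossings. Hence $W_{\Ba,\Bz}\cong W_{\Ba_g,\Bz}$, but the latter has Euler eigenvalue lowered by $q\ell g$ (Lemma~\ref{Euler}), which must vanish for $g\gg 0$ since the Euler spectrum on $\cO^-$ is bounded below. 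This is cleaner than iterating a single $x_i$: by moving the entire unsteady block at once and by a multiple of $\ell$, there is no strand-by-strand or case-by-case analysis of which crossings are invertible.

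For the converse, the paper again uses the Euler element as the controlling invariant: any $(\Ba,\Bz)$ with $\sum_j\Upsilon(a_j)$ sufficiently negative is automatically unsteady, so $\mathsf{h}(M)$ has Euler eigenvalues bounded below; combined with finite generation and local finiteness of $\mathsf{eu}$, this places $\mathsf{h}(M)$ in $\cO^-$. Your version, tracking the trajectory of an individual strand under iterated $x_i$ until it becomes unsteady, reaches the same conclusion but with more bookkeeping. What your route offers is a transparent, strand-level matching between $x_i$-nilpotency and the steadiness condition; what the paper's route buys is that the combinatorial matching you identify as the main obstacle simply never needs to be carried out.
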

\begin{proof}
Since the proof is the same in both cases, we consider the case of
$\mathcal{O}^-$.  The pair $(\Ba,\Bz)$ corresponds to an unsteady loading if and only if
there exists $I\subset [1,n]$ and a real number $\delta\leq \Upsilon(-\ps_i)$ for
all $i$ such that $\Upsilon(a_i)<\delta-|\Upsilon(k)|$ if $i\in I$ and
$\Upsilon(a_i)\geq \delta$ if $i\notin I$.  Note that permuting an
element of $I$ past one in $[1,n]\setminus I$ gives an isomorphism
between the corresponding weight functors, so without loss of
generality, we can assume that $I=[1,q]$.  Similarly, we have an
isomorphism of $W_{\Ba,\Bz}\cong W_{\Ba_g,\Bz}$ where
$\Ba_g=(a_1-g\ell,\dots, a_q-g\ell,a_{q+1},\dots, a_n)$, since the
corresponding loadings are connected by a crossingless diagram.    If
$N\in \mathcal{O}$, then $N$ must be killed by $W_{\Ba_g,\Bz}$ for
$g\gg 0$, since the Euler eigenvalues of $N$ are bounded below.  Thus,
$\mathsf{W}(N)$ is killed by $e(\Ba,\Bz)$ for any unsteady loading,
and thus the action on it factors through the steadied quotient.

On the other hand,  any pair $(\Ba,\Bz)$  with $\sum \Upsilon(a_j)$ sufficiently
negative must be unsteady, since if a strand is more than
$n|\Upsilon(k)|$ left of a red line, it must be destabilizing. Thus, if the action on $M$ factors through the steadied
quotient, then $\mathsf{h}(M)$ has Euler eigenvalues which are bounded
below.
Since $\mathsf{h}(M)$ is finitely generated, and the action of $\mathsf{eu}$
is locally finite, this shows that $\mathsf{h}(M)$ lies in category
$\cO$.  
\end{proof} 

Perhaps a few remarks are called for about the match of this result
with \cite[Thm. 4.7]{WebRou}.  Theorem \ref{th:O-iso} is more general,
since it does not assume that $\K=\C$. To recover
\cite[Thm. 4.7]{WebRou}, we consider the case where
$\Upsilon\colon \C\to \R$ is given by taking real part.

This theorem allows us to recover in an interesting way the
classification of modules in category $\cO$.  The best known version
of this classification is due to Ginzburg, Guay, Opdam and Rouqiuer:
\begin{theorem}[\mbox{\cite[Prop. 2.11]{GGOR}}]
  For every simple module $S$ in category $\cO^-$, the subspace $U$ of
  elements with minimal weight under $\mathsf{eu}$ is an irreducible
  module over $G(\ell, 1, n)$ and this describes a bijection between
  simples in $\cO$ and over $ G(\ell, 1, n)$.
\end{theorem}
Of course, simple modules over $G(\ell, 1, n)$ are indexed by
$\ell$-multipartitions with $n$ total boxes, and the corresponding
module over $G(\ell, 1, n)$ has a basis indexed by standard tableaux
on the corresponding Young diagram.   Since there are several notions
of standard tableau on a multi-partition, let us clarify that we just mean a filling with $[1,n]$ which
increases in rows and columns.

This construction is carried out in the style of Vershik and Okounkov
\cite{VO} in work of Pushkarev \cite{Pushkarev} and Ogievetsky and
Poulain d'Andecy \cite{OPdA}.  These papers show that, 
in particular,the subalgebra generated by $t_i$
(denoted $j_i$ in \cite{OPdA}) and the Jucys-Murphy elements
(denoted $\tilde{j}_i$ in  {\it loc.\ cit.}) has simple spectrum, with
elements in the spectrum in canonical bijection with standard tableaux
as discussed above.

In \cite[Prop. 11]{OPdA}, they define a representation $V_\xi$
of $\K \Gamma$
with a basis $v_{\mathsf{S}}$ for tableaux $\mathsf{S}$ of shape $\xi$, if the
entry $c$ is in the $i$th row and $j$th column of the $m$th component,
then $t_c$ acts by the scalar $\zeta^m$, and $(c,c+1)$ acts by switching
$c$ and $c+1$ if these are in different components, and by the Young
normal form if they are in the same component.  These are a complete
list of the irreps.

The most important tool in this construction is the
algebra they denote $\mathfrak{A}_{\ell,n}$ in \cite[\S 3]{OPdA}.
This is 
simply our algebra $\mathcal{D\!O}_n$ under an isomorphism
\[\tilde{x}_m\mapsto \frac{1}{k\ell}\DO_m\qquad x_m\mapsto t_m\qquad \bar{s}_i\mapsto
  (i,i+1).\]  In {\it loc.\ cit.}, the
algebra $\K \Gamma$ is written as a quotient of  $\mathcal{D\!O}_n$ by
setting $\DO_1=0$, but this is not the correct map to use for the
elements of minimal $\mathsf{eu}$-weight in a module.

Of course, $\mathcal{D\!O}_n$ acts on the subspace $U$, and does so
via a quotient map to $\K \Gamma$, but not this most obvious
one. Since $\tau$ acts by 0 on $U$, the product
$\sigma \tau=u_1-p(\zeta^{-1}t_1)-1$  does as well.
Thus, we have unique surjective homomorphism $\eta\colon \mathcal{D\!O}_n\to \K \Gamma$
splitting the usual inclusion and killing the 2-sided ideal generated
by  
$u_1-p(\zeta^{-1}t_1)-1$.  
In particular, if we
have a weight $(\Ba,\Bz)$ that appears in $V_\xi$ with $z_1=\zeta^m$ for
$m\in [0,\ell-1]$, then
$a_1=p(\zeta^{m-1})+1$.

Making small changes in arguments of \cite[\S 4]{OPdA}, we can see that the weights
of $V_\xi$ correspond to the tableaux $\mathsf{S}$ of shape $\xi$ as follows:
\begin{lemma}
If the entry $c$ is
   in the $i$th row and $j$th column of the $m$th component, then
   $\DO_c$ and $t_c$ act in the vector $v_{\mathsf{S}}$ by the scalars
   $a_c=p(\zeta^{m-1})+1+k\ell (j-i)$ and $z_c=\zeta^m$.    
\end{lemma}

All of
   these weights will give isomorphic idempotents $e(\Ba,\Bz)$ in
   $R_D$, which match the loading $\Bi_\xi$ introduced in \cite[Def.\
   2.11]{WebRou}; of course, we can see directly from the cellular
   structure of \cite[Th.\
   B]{WebRou} that these must be the lowest weights, showing the
   compatibility with the GGOR perspective.
   
Finally, we turn to considering the KZ functor of $\cO^\pm$.  This functor has a categorical interpretation: it is
represented by the sum of all self-dual projectives, with
multiplicities given by the dimensions of simple modules over Hecke
algebras at roots of unity. The functors $W_{\Ba,\Bz}$ are also
represented by projectives and thus it is natural to try to express
the KZ functor in terms of them.  

Choose a fixed lift $\varphi\colon D\to \K$, where
$\Sigma(\varphi(d),1)=d$.  
Choose an integer \[N\gg \max_{\substack{i\in
  [1,\ell]\\ d\in D}}(|\Upsilon(p(\zeta^i)|,|\Upsilon(k)|,|\Upsilon(\varphi(d))|).\]  For each $n$-tuple
$\mathbf{d}=(d_1,\dots, d_n)\in D^n$, let
\[\mathbf{a}_{\mathbf{d}}^\pm=(\varphi(d_1)\mp N,\varphi(d_2)\mp2N,\dots,
\varphi(d_n)\mp nN)\qquad \mathbf{1}=(1,\dots, 1).\]
\begin{theorem}
  The functor $\mathsf{KZ}$ on $\cO^\pm$ is isomorphic to the sum
  $\displaystyle\bigoplus_{\mathbf{d}\in D^n}
  W_{\mathbf{a}_{\mathbf{d}}^\pm,\mathbf{1}}$.  
\end{theorem}
\begin{proof}
As before, the argument is identical for the two different signs, and
so we consider $\cO^-$.
We need only show that there is an isomorphism between the
representing projectives.  For $\mathbf{d}\in D^N$, we can define a
loading which places a dot with label $d_m$ at $x=mN$. Let $e_{s,n}\in
R_D(-)$ be the sum of the idempotents for these loadings.  From the isomorphisms of Theorems
\ref{th:W-iso} and \ref{th:O-iso}, we know that $\oplus_{\mathbf{d}\in D^n}
  W_{\mathbf{a}_{\mathbf{d}}^-,\mathbf{1}}$ corresponds to the
  projective over $R_D(-)$ given by $R_D(-)e_{{s,N}}$.  
The isomorphism \cite[Thm. 4.5]{WebRou} sends this to the idempotent
$e_{D_{s,N}}$ in the notation of \cite[Sec. 2.5]{WebRou}, which
\cite[Thm. 3.9]{WebRou} shows corresponds to the KZ functor.
\end{proof}

The endomorphisms of the functor
$\oplus_{\mathbf{d}\in D^n}
W_{\mathbf{a}_{\mathbf{d}}^\pm,\mathbf{1}}$
are isomorphic to the cyclotomic KLR algebra with $n$ strands
corresponding to the highest weight $\sum_{i=1}^\ell \omega_{\ps_i}$.
Previous work of Brundan and Kleshchev \cite{BKKL} has constructed an
isomorphism of these to the cyclotomic Hecke algebras which naturally
act by monodromy on $\mathsf{KZ}$.

\subsection{The classification of Dunkl-Opdam modules}
\label{sec:classification}

The equivalence of Theorem \ref{th:W-iso} allows us to classify all simple Dunkl-Opdam
modules over $\mathsf{H}$, not just those in category $\cO^\pm$.

 For a general Dunkl-Opdam module, of course, there is no maximal or minimal
weight under $\mathsf{eu}$.  Instead, we must look for some other patterns
within the weights. 

A {\bf charged segment} is a $g$-tuple (for some $g\leq n$) of elements
$\mathbf{q}=(q_1,\dots, q_g)$ of
$\K/\Z$, which satisfy $q_{i+1}-q_i=k$.  We'll use {\bf lifted
  segment} to mean a similar $g$-tuple $\mathbf{a}$ in $\K$ satisfying $a_{i+1}-a_i=k\ell$
Choose a large negative integer $P\ll 0$,
and let $\Lambda(q_1,\dots, q_g)$ for a charged segment be the unique
lifted segment $(a_1,\dots, a_g)$ of elements of $\K$ such that
$\Sigma(a_i,z_i)=q_i$,  $a_{i+1}-a_i=k\ell$, $z_{i+1}=z_i$, and
$\Upsilon(a_1)$ is minimized subject to  $P\leq \Upsilon(a_i)$; this
means that $P\leq \Upsilon(a_1)< P+1$ if $\Upsilon(k)\geq 0$ and
$P\leq \Upsilon(a_g)< P+1$  if  $\Upsilon(k)\leq 0$.  A
charged multisegment is an $m$-tuple of charged segments.  The size of
a multisegment is the sum of the lengths of the segments.  

As usual, we can associate to any lifted segment $\mathbf{a}$ and
$z\in \mu_\ell(\K)$,  a 1-dimensional representation of
the algebra $\mathcal{D\!O}_g$ by letting
$S_g$ act trivially, the elements $t_i$ act by the scalar $z$  and
$\DO_i$ act by the scalar $a_i$; to a charged segment $\mathbf{q}$, we
associate the 1-dimensional representation for the distinguished lift $\Lambda(q_1,\dots, q_g)$.  

Note that by the usual theory of modules over degenerate affine Hecke
algebras, based on work of Zelevinsky \cite{Zelsegment} and refined
further by Suzuki \cite{SuzukidAHA}, we can
associate a simple $\mathcal{D\!O}_g$  module $L(\mathbf{Q})$  to any
multisegment $\mathbf{Q}$ of size $g$ by inducing up the tensor
product of the 
1-dimensional modules attached to segments ordered, and taking the unique
simple quotient.  Note that we have to be careful about the order of
lifted 
segments; if two lifted segments with the same $z$ of the form
$(a,a+k\ell,\dots)$ and $(a-hk\ell, a-(h-1)k\ell, \dots)$ with $h\in
\Z_{>0}$ appear, they must be in this order in the induction.  

Let \[\mathcal{D\!O}_{g,n-g}=\mathcal{D\!O}_g\otimes
\mathcal{D\!O}_{n-g}\subset \mathcal{D\!O}_n\] be the subalgebra
generated by $t_i,\DO_i$ for all $i\in [1,n]$ and the Young subgroup $S_g\times S_{n-g}$.  Given a multisegment $\mathbf{Q}$ of size $g$ and an
$\ell$-multipartition $\xi$ of size $n-g$, we have a
$\mathcal{D\!O}_{g,n-g}$ module $ L(\mathbf{Q})\otimes V_{\xi}$ by
taking outer tensor of these modules, where $V_{\xi}$ has the $ \mathcal{D\!O}_{n-g}$ module structure via
the homomorphism $\eta$ 
discussed in the previous section.

We can construct a module over $\mathsf{H}_n$
by considering \[\mathcal{M}(\mathbf{Q}, \xi)=\mathsf{H}_n
\otimes_{\mathcal{D\!O}_{g,n-g}}
(L(\mathbf{Q})\otimes V_{\xi}).\]
 Note that that this definition depends on the choice of $P$.  We
 assume from now on that $P<  \Upsilon(p(\zeta^{m}))- 2n|\Upsilon(k\ell)|$.

\begin{lemma}\label{lem:M-quotient}
Every simple Dunkl-Opdam module  $S$ is a quotient of
$\mathcal{M}(\mathbf{Q}, \xi)$ for some $\mathbf{Q}, \xi$.
\end{lemma}
\begin{proof}
For simplicity, we'll assume throughout the proof that $\Upsilon(k)\geq 0$.
  
  By assumption, we have that  $W_{\Ba,\Bz}(S)\neq 0$ for some
  $(\Ba,\Bz)$.  We claim that we can choose  $(\Ba,\Bz)$ so that
  $\Upsilon(a_i)> P$ for all $i$. We'll prove this by induction on the
  sum $\Pi$ of the quantity $P-\Upsilon(a_i)+1$ over the indices $i$
  such that $\Upsilon(a_i)\leq P$.  Obviously, this is 0 if and only
  if $\Upsilon(a_i)> P$ for all $i$.

  Consider the equivalence relation on the indices $[1,n]$ obtained by
  transitive closure of the relation that
  $i\sim j$ if we have that $a_i=a_{j}\pm k\ell$ and $z_i=z_{j}$.
  Note that we have $|\Upsilon(a_i)-\Upsilon(a_j)|\leq
  n|\Upsilon(k\ell)|$ for any $i\sim j$.  We will use several times
  the fact that
  \begin{itemize}
  \item[$(*)$] if two consecutive indices satisfy $i\not\sim i+1$, then
    $\theta_m\colon W_{(\Ba,\Bz)}(S)\to W_{s_i\cdot (\Ba,\Bz)}(S)$ is
    an isomorphism, so we can reorder these without changing whether the weight space is non-zero.
  \end{itemize}
  Let $i$ be the index that minimizes $\Upsilon(a_i)$.  If for any $i$, we have that $\Upsilon(a_i)\leq P$, then we have
  $\Upsilon(a_j)< \Upsilon(p(\zeta^{m}))$ for all $j\sim i$ and all
  $m$.  If we let $j$ be the largest index such that $j\sim i$, then
by $(*)$ we can 
  assume that $j=n$ without loss of generality.  In this case, have
  that $\Upsilon(a_n)< \Upsilon(p(\zeta^{m}))$ for all $m$, so
  $\sigma$ induces an isomorphism $W_{\Ba,\Bz}(S)\cong
  W_{\nu\cdot(\Ba,\Bz)}(S)$.  The weight $\nu\cdot(\Ba,\Bz)$ has
  strictly fewer indices in the equivalence class of $i$, so we can
  reduce to the case where $i=n$.  
  
  In this case, $(\Ba',\Bz')=  \nu\cdot(\Ba,\Bz)$
  has almost all indices the same, but $a_1'=a_n+1$, so either $\Pi$
  has dropped by exactly 1, or we have strictly fewer indices
  $\Upsilon(a_i)\leq P$, in which case $\Pi$ drops by at least 1.

  Thus, after performing this operation finitely many times, we must
  have 
  $\Pi$ drop to $0$.  Thus, we can assume that $\Upsilon(a_i)> P$ for
  all $i$.

  Now assume that $(\Ba,\Bz)$ minimizes $\sum \Upsilon(a_i)$ amongst weights
  satisfying this condition; that is, we minimize the eigenvalue of
  $\mathsf{eu}$ on this weight space.  Consider the intertwiner
  $\tau\colon W_{\Ba,\Bz}(S)\to
  W_{\nu^{-1}\cdot(\Ba,\Bz)}(S)$.  Since the latter weight space has
  lower Euler eigenvalue, either we must have $\Upsilon(a_1)-1\leq P$,
  or this map is 0; the latter can only happen if
  $a_1=p(\zeta^{m-1})+1, z_1=\zeta^m$ for some $m$, since
  $\sigma\tau=u_1-p(\zeta^{-1}t_1)-1$ must act by 0.  That is, we
  must have exactly one of the options:
  \begin{enumerate}
  \item $a_1=p(\zeta^{m-1})+1, z_1=\zeta^m$
  \item $P<\Upsilon(a_1)-1\leq P+1$
  \end{enumerate}
Using $(*)$ again, we see the same is true of any index $i$ such that $i$ is not equivalent to
any lower index.

  Thus, as before, we can decompose the
  indices $[1,n]$ according the equivalence relation $\sim$, and the
  lowest index in every
  equivalence class satisfies exactly one of (1) or (2).  This in turn
  breaks the indices into two classes which we call types (1') and (2'): either they are greater than or
  less than $P+n|\Upsilon(k\ell)|$.    All elements of an equivalence class containing
  an element satisfying (1) will necessarily be of type (1'), and those
  containing an element satisfying (2) will necessarily be of type
  (2').  The fact $(*)$ shows that we can assume that $[1,g]$ consists
  of 
  indices of type (1') and $[g+1,n]$ of type (2').  

  Now, we consider the module over $\mathcal{D\!O}_{n}$ generated by
  $W_{\Ba,\Bz}$, and consider any simple $K$
  $\mathcal{D\!O}_{n}$-submodule of this space; WLOG, we can assume
  this has non-trivial intersection with $W_{\Ba,\Bz}$.  Let $K'$ be
  the subspace in $K$ given by the sum of all weight spaces such that $[1,g]$ consists
  of 
  indices of type (1') and $[g+1,n]$ of type (2'); by assumption, this
  is a non-trivial module over  $\mathcal{D\!O}_{g,n-g}$.  We have an obvious
  map $\mathcal{D\!O}_{n}\otimes_{\mathcal{D\!O}_{g,n-g}}K'\to K$, and  
  applying $(*)$ shows that this is an isomorphism.  In particular
  $K'$ must be a simple  $\mathcal{D\!O}_{g,n-g}$-module, and thus
  $K'\cong L\otimes V$ for $L$ a simple $\mathcal{D\!O}_{g}$-module
  and $V$ a $\mathcal{D\!O}_{n-g}$-module.

  First, we claim that $L=L(\mathbf{Q})$ for some $\mathbf{Q}$.    The
  module $L$ corresponds to some lifted multisegment; let $a$ be first
  entry in one of these lifted segments which maximizes
  $\Upsilon(a)$.  By assumption $\Upsilon(a)>P$.  We can assume that $a+hk\ell$ does not appear as the
  first entry in one of these lifted segments for all $h\in \Z_{>0}$.
  Thus, the subspace $K'$ contains a weight with $a_1=a$; applying
  $\tau$ maps to a weight space with lower Euler eigenvalue, and is an
  isomorphism since the index $a$ is of type (1').  This is only
  possible if $\Upsilon(a)\leq P+1$, so the same is true of the initial
  element of each segment.  This shows that $L$ has the form
  $L(\mathbf{Q})$.

  Now, assume $g<n$.  Using $(*)$ again, we can also write
  $K=\mathcal{D\!O}_{n}\otimes_{\mathcal{D\!O}_{n-g,g}}(V\otimes L)$;
  the fact that $\tau$ acts trivially on any vector in $V\otimes L$ in
  this embedding shows that $V$ is killed by $u_1-p(\zeta^{-1}t_1)-1$,
  and thus must be of the form $V_\xi$ with $\xi$ having $n-g$ boxes.

  Thus, the inclusion of $\mathcal{D\!O}_{g,n-g}$-modules
  $L(\mathbf{Q})\otimes V_\xi\to S$ induces the desired surjection.  
\end{proof}

 Let $c_\xi$ be the eigenvalue of
 $\mathsf{eu}\in \mathcal{D\!O}_{n-g}$ acting on $V_\xi$.
\begin{definition}
  Let $\Delta(\mathbf{Q}, \xi)$ be the quotient of
  $\mathcal{M}(\mathbf{Q}, \xi)$ by the image of any map from
  $\mathcal{M} (\mathbf{Q}', \xi')$ with $\mathbf{Q}'$ of greater size
  than $\mathbf{Q}$ or $c_{\xi'}<c_{\xi}$.
\end{definition}
\begin{remark}
  If $\mathbf{Q}=\emptyset$, then we can easily check that these are
  the Verma modules in category $\cO$.  We should take pains here to
  emphasize that in general, these are {\it not} the standard modules
  of a quasi-hereditary structure on Dunkl-Opdam modules;
  consideration of the special case $n=1$ shows there is no such
  structure.  However, these are the proper standards of a standardly
  stratified structure one can easily derive from the approach of
  \cite[\S 5.4]{Webmerged}.
\end{remark}
In particular, if we just subtract $\ell$ from $P$
and all elements of $\mathbf{Q}$, then the module $\Delta(\mathbf{Q},
\xi)$  will be unchanged.  
\begin{theorem}
 For fixed $P\ll 0$, every simple Dunkl-Opdam module $S$ is the unique simple quotient of
 $\Delta(\mathbf{Q}, \xi)$ for a unique $\mathbf{Q}$ and $\xi$.
\end{theorem}

\begin{proof}
  Consider a simple Dunkl-Opdam module $S$.  By Lemma
  \ref{lem:M-quotient}, we have that $S$ is a quotient of some
  $M(\mathbf{Q},\xi)$, and we can choose $(\mathbf{Q},\xi)$  with $\xi$ having a minimal number of boxes, and
  $c_\xi$ minimal amongst the possible $\xi$ with the minimal number
  of boxes.

  In this case, $S$ is a quotient of $M(\mathbf{Q},\xi)$ but not of
  any of the $M(\mathbf{Q}',\xi')$ whose images we kill to get
  $\Delta(\mathbf{Q}, \xi)$.  Thus, the map to $M(\mathbf{Q},\xi)\to
  S$ must factor through $\Delta=\Delta(\mathbf{Q}, \xi)$.

  Now we must show that $\Delta$ is unique, and has a unique simple quotient.  Let $(\Ba,\Bz)$ be a weight space in $J=L(\mathbf{Q})\otimes V_\xi$.
  Then for any weight $(\Ba',\Bz')$ satisfying $\Upsilon(a_i')>P$, if we let $\{ w_1,\dots, w_k\}$
  be the finite set of elements of $\widehat{W}$ such that
  $w_p\cdot (\Ba,\Bz)=(\Ba',\Bz')$, then $W_{\Ba',\Bz'}(\Delta)$ is spanned
  by $d_kv$ for $d_k$ a sequence of intertwining operators tracing out
  $w_k$ (or equivalently, $\Xi$ applied to the weighted KLR diagram
  $\xi(\Ba,\Bz,w_k)$) and
  $v\in W_{\Ba',\Bz'}(\Delta)\cap J$.  Note that
  all intermediate steps of these interwining operators pass through
  $(\Ba'',\Bz'')$ with
\[P<\min_i (\Upsilon(a_i'), \Upsilon(a_i))\leq   \Upsilon(a_k'')\leq
  \max_i (\Upsilon(a_i'), \Upsilon(a_i)).\]

Now, assume that $\sum a_i=\sum a_i'$, that is, that these have the
same Euler eigenvalue.  If $w_k$ is not in $S_n$, then we can arrange
this sequence of intertwiners so that a $\tau$ appears before a
$\sigma$ using the relations
(\ref{isotope1},\ref{isotope2},\ref{last}).  Thus, this sequence
factors through a weight space with lower Euler eigenvalue that still
satisfies $\Upsilon(a_i'')>P$ for all $i$.  By assumption, this weight
space is zero.

That is, we must have \[W_{\Ba',\Bz'}(\Delta)\subset
\mathcal{D\!O}_{n}\cdot J=J.\]
This shows that $J$ is uniquely characterized
as the sum of the weight spaces in $\Delta$ which minimize
$\mathsf{eu}$ among those with $\Upsilon(a_i)>P$.  Since this space is
a simple $\mathcal{D\!O}_{n}$-module, any submodule $N$ of $\Delta$
with $N\cap J\neq 0$ must have $J\subset N$ and so $N=\Delta$.  That
is, $N$ is proper if and only if $N\cap J=0$; as usual, this implies
that the sum of all proper submodules is proper and $\Delta$ has a
unique simple quotient.  

On the other hand, this also show that $(\mathbf{Q}, \xi)$ can be
reconstructed from this simple quotient by considering the
$\mathcal{D\!O}_{n} $ action on the sum of the weight spaces in
$\Delta$ which minimize $\mathsf{eu}$ among those with $\Upsilon(a_i)>P$. 
\end{proof}

It's worth noting the similarity of this result to that for the trigonometric
Cherednik algebra (also known as degenerate double affine Hecke
algebra) by Suzuki \cite[Cor. 8.3]{SuzukiCherednik}; if we replace the
equations (\ref{sigtau},\ref{tausig}) by $\sigma\tau=\tau\sigma=1$,
then one can check that we get a slight variation on the usual
presentation of the trigonometric Cherednik algebra, and our result
reduces to Suzuki's.

\subsection{Positive characteristic}
\label{sec:posit-char}

Lemma \ref{lem:W-iso} fails as stated if $\K$ is a field of
characteristic $p$; its very statement uses the existence of
$\Q$-linear maps $\K\to \R$.  However, the functor $\mathsf{W}$ and
the general strategy of computing its endomorphisms remain valid.  The
result is quite interesting because of its relationship to the
coherent sheaves on the degree $n$ Hilbert scheme of
$\C^2/(\Z/\ell\Z)$. More precisely, consider the case where
$\K=\mathbb{F}_p$ for $p\nmid \ell$, and $D$ is the (finite) set of
all pairs possible in this field; let
$\mathsf{Coh}_{\operatorname{pun}}(\mathsf{Hilb}^n(\C^2/(\Z/\ell\Z)))$ be the
category of  coherent sheaves on the Hilbert scheme supported on a
formal neighborhood of the punctual Hilbert scheme.  In the case of
$\ell=1$, this is a well-established result of Bezrukavnikov,
Finkelberg and Ginzburg:
\begin{proposition}[\mbox{\cite[Thms. 1.3.2 \& 1.4.1]{BFG}}]
For $p\gg 0$ and $k$ generic, we have that
 \[ D^b(\mathsf{H}\mmod_D)\cong D^b(\mathsf{Coh}_{\operatorname{pun}}(\mathsf{Hilb}^n(\C^2))).\]
\end{proposition}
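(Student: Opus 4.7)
The plan is to translate the main derived equivalence of \cite{BFG} into the language of Dunkl--Opdam weight categories introduced above. First, I would recall that in the $\ell=1$ case, for $p \gg 0$ and $k$ generic, BFG construct a tilting generator $\mathcal{P}$ on $\mathsf{Hilb}^n(\C^2)$ whose endomorphism algebra recovers (a suitable form of) the Cherednik algebra $\mathsf{H}$ after passage through its $p$-centre; this yields a derived equivalence $D^b(\mathsf{H}\mmod) \cong D^b(\mathsf{Coh}(\mathsf{Hilb}^n(\C^2)))$ in an appropriately completed sense, compatible with the central character decomposition on both sides.

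Second, I would match the support conditions on both sides. By Lemma~\ref{Euler} together with the subsequent lemma identifying $\bbK[u_1,\dots,u_n]$ as a polynomial subring of $\mathsf{H}$, the symmetric invariants $\bbK[u_1,\dots,u_n]^{S_n}$ land in the center of $\mathsf{H}$ and (up to the usual Dunkl--Cherednik shift) recover the standard description of the central subalgebra. In the $\ell=1$ case with $D=\mathbb{F}_p$, the modules in $\mathsf{H}\mmod_D$ are precisely those whose $u_i$-eigenvalues all lie in $\mathbb{F}_p$, hence whose central characters are ``integral.'' Under the BFG equivalence, integral central characters correspond to points of $\C^{2n}/S_n$ set-theoretically supported at the origin; via the Hilbert--Chow morphism, this matches sheaves on the formal neighborhood of the punctual Hilbert scheme.

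Third, I would verify that BFG's derived equivalence restricts correctly to these subcategories. Since their equivalence is built from a tilting object and is compatible with completion at central characters, restricting to the respective supported subcategories (integral central character on one side, punctual neighborhood on the other) preserves the equivalence and yields the claimed statement.

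The main obstacle I anticipate is identifying $\mathsf{H}\mmod_D$ with the central-character-supported category directly invoked by BFG: a priori a module could have integral central character without each individual $u_i$-eigenvalue lying in $\mathbb{F}_p$. However, in characteristic $p$ the powers $u_i^p$ lie in the $p$-centre, and integrality of the central character, in particular of the elementary symmetric functions in the $u_i^p$, forces each $u_i$-eigenvalue itself to lie in $\mathbb{F}_p$. This collapses the distinction, ensures the two categories agree, and lets the derived equivalence restrict as stated.
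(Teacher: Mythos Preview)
The paper does not supply a proof of this proposition at all: it is simply quoted as a result of Bezrukavnikov--Finkelberg--Ginzburg, with the citation \cite[Thms.~1.3.2 \& 1.4.1]{BFG}. So there is no ``paper's own proof'' to compare your proposal against; your sketch is already doing more work than the paper attempts, namely explaining how BFG's statement translates into the Dunkl--Opdam weight language. The overall shape of your outline---quote the BFG derived equivalence, then match support conditions on the two sides---is the right strategy for such a translation.

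That said, there is a genuine error in your second step. You assert that the symmetric invariants $\bbK[u_1,\dots,u_n]^{S_n}$ land in the center of $\mathsf{H}$. They do not. They commute with $\bbK[\Gamma]$ (this is the degenerate affine Hecke relation), but they fail to commute with $\sigma$ and $\tau$, hence with the $x_i$ and $y_i$. Concretely, from \eqref{u-sig} one has $(u_1+\cdots+u_n)\sigma=\sigma(u_0+\cdots+u_{n-1})=\sigma(u_1+\cdots+u_n-\hbar)$, so already the power-sum $p_1(u)$ is not central when $\hbar=1$; this is just the statement (Lemma~\ref{Euler}) that $\sigma$ has Euler weight $+1$. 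Your argument identifying $\mathsf{H}\mmod_D$ with a central-character subcategory therefore does not go through as written. The correct bridge is the $p$-center of $\mathsf{H}$ in characteristic $p$, which in BFG is generated not by the $u_i$ but by Frobenius-twisted versions of the $x_i$ and $y_i$; one must then argue that modules supported at the origin for that $p$-center are exactly the weight modules in the sense of this paper. Your final paragraph gestures at the $p$-center, but the claim ``$u_i^p$ lies in the $p$-centre'' is again not literally correct (the same commutation with $\sigma$ fails), and the deduction about eigenvalues would need to be reworked through the actual $p$-central elements.
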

This result is extended to  $\ell>1$ in \cite{BFwreath}.

We'll discuss the computation of $\End(\mathsf{W})$ in a more general
context in future work \cite{WebcohII}, where we can give more detailed context; the
combinatorial description of this endomorphism algebra is a
cylindrical version of the KLR algebra which has not yet been
introduced in the literature. This modified KLR algebra is actually a
more useful object for algebraic geometers than the Cherednik algebra,
since even in characteristic 0, it appears as the endomorphisms of a
tilting bundle on the Hilbert scheme, and thus can describe all
coherent sheaves, not just those set-theoretically supported on the
punctual Hilbert scheme.

This also fits into a more general
context about Coulomb branches (as discussed in Section
\ref{sec:coulomb-branches}) in characteristic $p$, which we do not
have the space to develop here.

\section{Coulomb branches}
\label{sec:coulomb-branches}

The isomorphism of Theorem \ref{Cherednik-iso} makes it easy to see the relationship between the
cyclotomic Cherednik algebra and quantum Coulomb branches.  Consider
the $GL_n$ representation $V=\mathfrak{gl}_n\oplus (\C^n)^{\oplus
  \ell}$, and consider the BFN space 
\[\Cspace=\big\{ (g(t),v(t))\in GL_n((t))\times_{GL_n[[t]]}V[[t]]\mid
g(t)\cdot v(t)\in V[[t]]\big\}\]
as discussed in  \cite{NaCoulomb,BFN}.  
For an action of $GL_N$ on any space, we will use the term {\bf
  equivariant parameters} to mean the equivariant Chern classes of the
trivial bundle with fiber $\C^N$.  
The BFN space has:
\begin{enumerate}
\item an action of $\C^*$ by loop
  rotation with equivariant parameter $\ell \hbar$;
\item an obvious action of $GL_n[[t]]$; we will identify the Chern
  classes of the tautological bundle for this action with the
  elementary symmetric polynomials $e_i(\mathbf{U})$, and thus the
  Chern roots with $U_i$;
\item an action of $GL_\ell$ on the multiplicity space of
  $\C^n$;  we will identify the   Chern roots of the tautological bundle
with $-s_i+\ell\hbar $;
\item an action of $\C^*$ by scalar multiplication on
  $\mathfrak{gl}_n$ with equivariant parameter $k$.
\end{enumerate}
All of these actions
commute.  We let $G$ be the product of the first two, and $H$ the
product of the last two.    Consider the $G\times H$-equivariant Borel-Moore homology  $\Coulomb=H_*^{G\times H}(\Cspace)$; this algebra is the quantum Coulomb branch of the
gauge theory attached to $V$.

This
algebra acts naturally on the $G\times H$-equivariant
homology of $V[[t]]$, which is the same as that of a point, that is, a
polynomial ring over $\K$ in the equivariant parameters $\hbar,
e_i(\mathbf{U}),e_i(\bps),k$.  

\begin{theorem}
There is an isomorphism of $e\mathsf{H}e$ with the quantum Coulomb
branch $\Coulomb$.  This isomorphism is induced by the  isomorphism 
$\poly^{\Gamma}\otimes \param\cong H^*_{G\times H}(*)$ discussed above.
\end{theorem}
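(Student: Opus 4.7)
The plan is to identify both $e\mathsf{H}e$ and $\Coulomb$ with the same subalgebra of $\End_{\bbK}(\poly^{\Gamma} \otimes \param)$ by matching their actions on polynomials under the parameter identification stated in the theorem.

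First, I would observe that the polynomial representation of $\mathsf{H}$ on $\poly \otimes \param$ constructed after Lemma \ref{Euler} is faithful. Faithfulness follows from a PBW-type argument implicit in Theorem \ref{Cherednik-iso}: the symmetric polynomials in the $u_i$ act by multiplication by the corresponding symmetric polynomials in the $U_i$, while $\sigma$ and $\tau$ shift coordinates, so the filtration by degree separates the image of an arbitrary element of $\alg$. Restricting this faithful action to the spherical idempotent exhibits $e\mathsf{H}e$ as a faithful algebra of operators on $\poly^{\Gamma} \otimes \param$.

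Second, by the general BFN construction \cite{BFN}, the quantum Coulomb branch $\Coulomb$ acts faithfully by convolution on $H^*_{G \times H}(V[[t]]) \cong H^*_{G \times H}(\ast)$. Under the parameter identifications listed in the theorem, this polynomial algebra matches $\poly^{\Gamma} \otimes \param$: the $GL_n$-equivariant parameters $e_i(\mathbf{U})$ correspond to elementary symmetric polynomials in $U_1, \dots, U_n$, the $GL_\ell$-parameters $e_i(\bps)$ to elementary symmetric polynomials in $\ps_0, \dots, \ps_{\ell-1}$, while $\hbar$ and $k$ match their namesakes.

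Third, I would apply \cite[Thm.~1.5]{KoNa}, which describes the action of the minuscule monopole operators for this gauge theory — which, together with the equivariant parameters, generate $\Coulomb$ — as explicit difference operators on $H^*_{G \times H}(\ast)$. By the remark following the polynomial representation in Section \ref{sec:remind-cher-algebr}, these are precisely the operators by which $e \sigma e$ and $e \tau e$ act on $\poly^{\Gamma} \otimes \param$. Since $e\mathsf{H}e$ is generated over symmetric polynomials in the $u_i$'s by $e \sigma e$ and $e \tau e$ — all $x_j$ and $y_j$ arise as conjugates of $\sigma, \tau$ by the cyclic permutations $\cycle_j, \ncycle_j$ of Theorem \ref{Cherednik-iso}, and the spherical subalgebra is spanned by $\Gamma$-invariant products of these — the two algebras determine the same subalgebra of $\End_{\bbK}(\poly^{\Gamma} \otimes \param)$, and are therefore isomorphic.

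The main obstacle in this plan is the clean identification of the minuscule monopole operators of \cite{KoNa} with the spherical images of $\sigma$ and $\tau$: this requires unwinding the BFN convolution definition and comparing it term-by-term with the explicit action formulas of Section \ref{sec:remind-cher-algebr}. Once that comparison is carried out, the theorem is immediate, since the equivariant parameters on the Coulomb side are manifestly central in $\Coulomb$ and match the symmetric polynomials in $u_i$ in $e\mathsf{H}e$, and the remaining generators coincide by the matching of difference operators.
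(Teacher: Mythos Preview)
Your strategy---identify both algebras as subalgebras of $\End(\poly^{\Gamma}\otimes\param)$ via their faithful polynomial actions and match generators---is reasonable and close to Kodera--Nakajima's own approach, which the paper explicitly defers to for this theorem. The paper does something different: rather than attacking the spherical statement directly, it proves an \emph{Iwahori} version (the Lemma immediately following), replacing $e$ by the partial idempotent $e'$ that symmetrizes only over $(\Z/\ell\Z)^n$, and replacing $\Coulomb$ by $\fCoulomb=H_*^{I\times\C^*}(\Cspace')$. In that setting the individual $u_i$'s and the full $S_n$ (acting via the Springer sheaf) are visible, the relevant shift elements are $e'x_1^{\ell-1}\sigma e'$ and $e'y_n^{\ell-1}\tau e'$, and surjectivity is checked by hitting the Schubert-cell fundamental classes. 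This buys a cleaner bookkeeping of generators and an explicit surjectivity argument that does not rely on any external generation theorem.

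There is a genuine gap in your version: the claim that $e\mathsf{H}e$ is generated over symmetric polynomials in the $u_i$ by the two elements $e\sigma e$ and $e\tau e$ is not justified, and your sketch does not establish it. The observation that each $x_j$ (resp.\ $y_j$) is an $S_n$-conjugate of $\sigma$ (resp.\ $\tau$) collapses after symmetrizing by $e$ on both sides: all those conjugates give the \emph{same} spherical element, so you have not produced a generating set. On the Coulomb side, one typically needs the full family of minuscule monopole operators (one for each fundamental coweight of $GL_n$), not just the pair for $\omega_1$ and $-\omega_1$; correspondingly, on the Cherednik side one expects to need symmetrizations of $x_1\cdots x_m$ and $y_1\cdots y_m$ for all $m$, or an independent argument. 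Kodera--Nakajima match the entire family of difference operators; the paper sidesteps the issue by going to the Iwahori level where the $S_n$-action and polynomial part are decoupled and generation is transparent. If you want to repair your argument while staying spherical, you must either supply the full family of generators and match them, or prove the two-element generation claim directly.
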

In \cite{dBHOOY2}, the commutative Coulomb branch of the corresponding
gauge theory is described as the cone $\Sym^n(\C^2/(\Z/\ell\Z))$; by
the uniqueness of quantizations shown by Losev \cite{Losq,Losorbit},
we must have that $ \Coulomb$ is isomorphic to $e\mathsf{H}e$, which
is a well-known quantization of this variety.  However, having a
concrete understanding of this isomorphism is of course, more useful,
and more revealing about the structure of both algebras.  Since a
proof of this result was recently given by Kodera-Nakajima
\cite{KoNa}, we will only sketch the isomorphism below.  However, we
believe it is of some independent interest, since this isomorphism is
quite straightforward given the isomorphism of Theorem
\ref{Cherednik-iso}.

Let us prove a slightly stronger (but none the less easier) version of
this theorem.  The BFN space can be replaced by its Iwahori analogue.
Let $I=\{g(t)\in GL_n[[t]] \mid g(0)\in B\}$ be the standard Iwahori
corresponding to the standard Borel $B$ of upper triangular invertible
matrices.  This analogue is defined by:
\begin{align*}
\mathfrak{I}&=\{v(0)\in
\mathfrak{b}\oplus (\C^n)^{\oplus \ell}\mid v(t)\in V[[t]]\}\\
\Cspace'&=\{ (g(t),v(t))\in GL_n((t))\times_{I}\mathfrak{I}\mid
g(t)\cdot v(t)\in \mathfrak{I}\},
\end{align*}
and the quantum Coulomb branch can be replaced by its
Iwahori version $\fCoulomb=H_*^{I\times \C^*}(\Cspace')$; see \cite[\S
4]{BEF} for a more detailed discussion of this variety. Similarly, we
can replace $e\mathsf{H}e$  by $e'\mathsf{H}e'$
where 
\[e'=\frac{1}{\ell^n}\sum_{\mathbf{i}\in (\Z/\ell\Z)^n}t_1^{i_1}\cdots t_n^{i_n}\] is
just the idempotent symmetrizing for the action of $A=(\Z/\ell\Z)^n$.
More generally, for any character $\eta$ of the group $A$, we have an
idempotent  
\begin{equation}
e_\eta=\frac{1}{\ell^n}\sum_{\mathbf{i}\in (\Z/\ell\Z)^n}\eta(t_1^{-i_1}\cdots
  t_n^{-i_n} )t_1^{i_1}\cdots t_n^{i_n}\label{eq:eta-idempotent}
\end{equation}
the idempotent  of the group algebra $\C[A]$ projecting to this
isotypic component.
We let $E_\eta=e_\eta\cdot 1\in \poly$ and $E'=E_1$; this is effectively the same
sum as \eqref{eq:eta-idempotent}, but with the substitution
$t_i\mapsto T_i$.  
Since $\poly\cong \C[U_1,\dots,
U_n]\otimes_{\C}\C[A]$, we have that \[\poly =\bigoplus_{\eta} \C[U_1,\dots,
U_n]E_\eta.\] 

Thus, both algebras $\fCoulomb$ and $e'\mathsf{H}e' $ act naturally on $\poly^A\cong H^*_{I\times
  \C^*}(*)$, identifying the variables $U_i$ with the Euler classes of
the tautological line bundles on the classifying space of $I$.    
\begin{lemma}
There is an isomorphism of $e'\mathsf{H}e'$ with the flag quantum Coulomb
branch $\fCoulomb$.  This isomorphism is induced by the obvious  isomorphism 
$e'\poly=\poly^A\cong H^*_{I\times
  \C^*}(*)$.  
\end{lemma}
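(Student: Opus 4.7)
The plan is to construct the isomorphism $e'\mathsf{H}e' \to \fCoulomb$ by matching the actions of both algebras on the common polynomial representation $\poly \cong H^*_{I\times\C^*}(*)$. On the Cherednik side, I would use the ``strange'' polynomial representation defined in equations (2.11-2.15) of the previous section, which restricts on $e'\mathsf{H}e'$ to a faithful module thanks to the PBW decomposition together with $u_i \mapsto U_i$ and $t_i \mapsto T_i$. On the Coulomb side, the action of $\fCoulomb$ on $H^*_{I\times\C^*}(*)$ by convolution on $\Cspace'$ is faithful by the general theory of \cite{BFN}. It thus suffices to exhibit, on the Coulomb side, generators whose action on $\poly$ reproduces the formulae (2.11-2.15).

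The candidates are natural: equivariant parameters of $I$ provide $u_i$; the $(\Z/\ell\Z)^n$ component of the Iwahori torus accounts for the $t_i$ once one multiplies by $e'$; the classes of the simple-reflection affine Schubert cells yield the transpositions $(i,i+1)$ via the standard BFN divided-difference recipe; and the minuscule monopole operators attached to the fundamental coweights $\pm\omega_1$ (equivalently, a lift of the length-zero rotation element of the extended affine Weyl group of $GL_n$) realize $\sigma$ and $\tau$. The key computation --- essentially \cite[Thm.~1.5]{KoNa} pulled back to the Iwahori level --- shows that these monopole operators translate $U_i \mapsto U_{i\pm 1}$ while picking up the prefactor $u_1 + p(t_1)$ demanded in (2.14), where the $p(t_1)$ part arises from the equivariant Euler class of the quotient $V[[t]]/g(t)V[[t]]$ for $V = \fgl_n \oplus (\C^n)^{\oplus \ell}$ (the $\fgl_n$ summand supplying the $k$-dependence, the $(\C^n)^{\oplus \ell}$ summand the $h_r$-dependence).

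Once the matching of actions is verified, the resulting map $e'\mathsf{H}e' \to \fCoulomb$ is automatically well-defined (the relations of Definition~2.1 hold because the polynomial representation is faithful) and injective. The remaining obstacle is surjectivity, i.e.\ that the listed elements generate $\fCoulomb$; this follows from the general BFN structural results employed by Braverman-Etingof-Finkelberg in \cite[\S 4]{BEF} in the closely analogous degenerate DAHA setting, since $\fCoulomb$ is a filtered deformation whose associated graded is generated by equivariant parameters together with the minuscule monopole and affine simple-reflection classes. The Iwahori version is noticeably easier than the spherical one stated as the preceding theorem because there is no $S_n$-symmetrization to keep track of, and the identification $\poly \cong H^*_{I\times\C^*}(*)$ is tautological rather than arising through an invariants construction.
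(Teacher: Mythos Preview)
Your overall strategy---matching generators through a common action on the polynomial module---is the same as the paper's. The gap is in which elements of $e'\mathsf{H}e'$ you propose to match with the length-zero monopole operators. The generators $\sigma,\tau$ do not survive to $e'\mathsf{H}e'$ at all: the relation $t_1\sigma=\sigma t_0=\zeta^{-1}\sigma t_n$ shows that $\sigma$ shifts the $(\Z/\ell\Z)^n$-weight, so $e'\sigma e'=0$. There is also no ``$(\Z/\ell\Z)^n$ component of the Iwahori torus''---the Iwahori sits in $GL_n$---and once you apply $e'$ the variables $T_i$ disappear from the module, so the single linear prefactor $u_1+p(t_1)$ you quote from (2.14) is meaningless on the Coulomb side.

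The paper instead matches the shift correspondences $X_\sigma, X_\tau$ with $e'x_1^{\ell-1}\sigma e'$ and $e'y_n^{\ell-1}\tau e'$, which \emph{do} commute with $e'$. The key computation, absent from your sketch, is that $e'x_1^{\ell-1}\sigma e'$ sends $1$ to the degree-$\ell$ product
\[
\prod_{m=0}^{\ell-1}\bigl(u_1-m\hbar+p(\zeta^{-m})\bigr)=u_1^\ell+e_1(\bps)u_1^{\ell-1}+\cdots+e_\ell(\bps),
\]
while $[X_\sigma]\cdot 1$ is the Euler class of $\ell$ copies of the tautological line bundle (transforming in the standard representation of the flavor group $GL_\ell$), which by the Whitney sum formula gives the same polynomial. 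This identity is precisely what pins the Cherednik parameters to the $GL_\ell$-equivariant parameters $e_i(\bps)$; a single factor $u_1+p(t_1)$ could never do this. The paper's surjectivity argument is also more concrete than an appeal to filtered deformations: it uses the Schubert cell decomposition of $\Cspace'$ together with the standard fact that products of simple-reflection classes hit each Schubert class modulo shorter ones.
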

This extension is also proven by Braverman-Etingof-Finkelberg \cite[\S
4.2]{BEF} with a similar proof.
\begin{proof}
  In both cases, we have a copy of polynomial multiplication, given by
  the $e'u_i$ in $e'\mathsf{H}e'$ and the Chern classes of
  tautological bundles in $\fCoulomb$.
We also have copies
of $S_n$ which act as in dAHA.  In
  $\fCoulomb$, this is given by the pullback of the action of $S_n$ on
  the Springer sheaf.  
Finally, the shift element  $e'y_n^{\ell-1}\tau e'$ agrees with the
shift 
correspondence \[X_{\tau}=\{(V_\bullet,V'_{\bullet}\mid
V_i=V'_{i+1}\}\] and 
$e' x_1^{\ell-1}\sigma e'$ agrees with the
correspondence \[X_{\sigma}=\{(V_\bullet,V'_{\bullet}\mid V_i=V'_{i-1}\}\]
To see that these act the same way, we need only check their
commutation with $u_i$, as in (\ref{u-sig}--\ref{u-tau}), and that
they act correctly on the unit.  The commutation is clear, since the
shift correspondence simply reindexes the tautological line bundles.  

The element $e'y_n^{\ell-1}\tau e'$
and $[X_{\tau}]$ both send $E'$ to $E'$.  We claim that the element $e'
 x_1^{\ell-1}\sigma e'$ sends $E'$ to
\begin{multline}
 (U_1+\hbar -p(\zeta^{-1}))\cdots (U_1+(\ell-1)\hbar-
  p(\zeta)) (U_1+\ell\hbar -p(1))E'\\=(U_1-s_{\ell-1}+\ell \hbar)
  \cdots (U_1-s_1+\ell\hbar) E'.\label{eq:push1}
\end{multline}
In order to do this computation, we have to leave $\poly^A$, and
consider elements of $\poly$ transforming over another character
$\eta\colon A\to \C^*$.  
Consider the character $\eta_i(t_j)=\zeta^{\delta_{ij}}$; note that $e_{\eta\eta_i}x_i=x_ie_{\eta}$.

Recall that $x_1=\sigma\upsilon_1^{-1}$.  Note that
\begin{equation}\label{eq:upsilon}\upsilon_1^{-1}t_1=t_n\upsilon_1^{-1}\qquad
  \upsilon_1^{-1}u_1=u_n\upsilon_1^{-1}+a
\end{equation}
where $a$ is a diagram given by permutations of length $<n-1$.
Thus combining \eqref{eq:upsilon} with (\ref{u-sig}) and
(\ref{t-sig}), if we have a polynomial $f(u_1,t_1)$, then \[x_1\cdot
  f(U_1,T_1)e_{\eta} =f(U_1+\hbar,\zeta^{-1}T_1) (U_1+\hbar
  -p(\zeta^{-1})) E_{\eta_i\eta} +(1-e_{\eta_i\eta}) \cdot a'(f)\] for a correction term $a'(f).$

Now, let us apply this to the proof of \eqref{eq:push1}.  
First, note that $\sigma\cdot E' = (U_1+\hbar -p(\zeta^{-1})) E_{\eta_1}$.  Thus, we have that:
\begin{align*}
 e'x_1^{\ell-1}\sigma\cdot  E'&=e'x_1^{\ell-1} \cdot (U_1+\hbar -p(\zeta^{-1}))E_{\eta_i}\\
  &= e' x_1^{\ell-2}\cdot  (U_1+2\hbar -p(\zeta^{-2})) (U_1+\hbar
    -p(\zeta^{-1})) E_{\eta_i^2} + \\
  &\qquad \qquad e'x_1^{\ell-2}(1-e_{\eta_i^2}) \cdot  a'(u_1+\hbar-p(\zeta^{-1}))
\end{align*}
Since $e'x_1^{\ell-2}(1-e_{\eta_i^2})=0$, this correction term vanishes.  Applying this inductively, we find that 
\begin{align*}
 e'x_1^{\ell-1}\sigma E'
  &= e' x_1^{\ell-2}\cdot  (U_1+2\hbar -p(\zeta^{-2})) (U_1+\hbar -p(\zeta^{-1})) E_{\eta_i^2} \\
  &= e' x_1^{\ell-3} \cdot (U_1+3\hbar -p(\zeta^{-3})(U_1+2\hbar -p(\zeta^{-2})) (U_1+\hbar -p(\zeta^{-1})) E_{\eta_i^3} \\
  &\hspace{2mm}\vdots\\
   &=(U_1+\hbar -p(\zeta^{-1}))\cdots (U_1+(\ell-1)\hbar-
  p(\zeta)) (U_1+\ell\hbar -p(1))E'
\end{align*}
This shows equation \eqref{eq:push1}.  

On the other
hand, $[X_{\sigma}]\cdot 1$ is the class of the subspace of flags such
that $\rho\cdot v(t)\in V[[t]]$ where 
\[\rho=
\begin{bmatrix}
  0 & 1 &0&\cdots& 0 \\
  0& 0& 1&\cdots& 0\\
  0& 0& 0&\cdots& 0\\
\vdots & \vdots & \vdots & \ddots &\vdots \\
  t^{-1}& 0& 0&\cdots& 0\\
\end{bmatrix}.
\]
The obstruction to this is the constant term of the first component of
$v(t)$.  This is a section of $\ell$ copies of the tautological bundle
on the affine Grassmannian, which transform according to the standard
representation of $GL_\ell$, and trivially with respect to the loop $\C^*$.  Thus, $[X_{\sigma}]\cdot 1$ is just the
Euler class of this bundle, which agrees with \eqref{eq:push1} by the
convention we have chosen for Chern roots.
This completes the proof that we have a map $e'\mathsf{H}e'\to
\fCoulomb$.   

We note that $\Cspace'$ has a cell
decomposition pulling back the Schubert decomposition, and this map
hits the fundamental class of each cell.   Using the shift elements
constructed above, we see that the map from $e'\mathsf{H}e'$ hits the
classes of 
Schubert cells for all simple reflections.  Multiplying the classes of
the simple reflections in the reduced decomposition of an element of
the Weyl group hits the class of the corresponding Schubert cell, plus
those of shorter length, by a standard argument (see, for example,
\cite[Lemma 3.13]{SWschur}).  Thus, the map is surjective, and the
proof is completed.
\end{proof}

\bibliography{./gen}
\bibliographystyle{amsalpha}
\end{document}